\newtheorem{definition}{Definition}
\newtheorem{remark}{Remark}
\newtheorem{lemma}{Lemma}
\newtheorem{proposition}{Proposition}
\newtheorem{theorem}{Theorem}
\newcommand{\R}{\mathbb{R}}
\newcommand{\N}{\mathbb{N}}
\newcommand{\Id}{{\mathds{1}}}
\newcommand{\tr}{\mathrm{tr}}
\newcommand{\domain}{\Omega}
\newcommand{\MN}{{\mbox{\tiny $M\!N$}}}
\newcommand{\discreteDomain}{\Omega_{\MN}}
\renewcommand{\div}{{\mathrm{div}}}
\newcommand{\GL}{{\mathrm{GL}}}
\newcommand{\sym}{\mathrm{sym}}
\DeclareMathOperator*{\argmin}{argmin}
\def\Xint#1{\mathchoice
{\XXint\displaystyle\textstyle{#1}}
{\XXint\textstyle\scriptstyle{#1}}
{\XXint\scriptstyle\scriptscriptstyle{#1}}
{\XXint\scriptscriptstyle\scriptscriptstyle{#1}}
\!\int}
\def\XXint#1#2#3{{\setbox0=\hbox{$#1{#2#3}{\int}$}
\vcenter{\hbox{$#2#3$}}\kern-.5\wd0}}
\def\dashint{\Xint-}
\newcommand\restr[2]{{\left.\kern-\nulldelimiterspace #1 \vphantom{\big|}\right|_{#2}}}
\newcommand{\energyDensity}{\mathrm{W}}
\newcommand{\energy}{\mathcal{W}} 
\newcommand{\pathenergy}{\mathcal{E}}
\newcommand{\Pathenergy}{\mathbf{E}}
\newcommand{\FeatureOperator}{\mathbf{F}}
\newcommand{\image}{u}
\newcommand{\Image}{\mathbf{u}}
\newcommand{\feature}{f}
\newcommand{\discreteFeature}{\mathbf{f}}
\newcommand{\discreteAnisotropy}{\mathbf{a}}
\newcommand{\imageSpace}{\mathcal{I}}
\newcommand{\featureSpace}{\mathcal{F}}
\newcommand{\motionSpace}{\mathcal{V}}
\newcommand{\admset}{\mathcal{A}} 
\newcommand{\discreteAdmset}{\mathcal{A}_{\MN}}
\newcommand{\interpolfeature}{\mathcal{F}}
\newcommand{\featurevec}{\mathbf{f}}
\newcommand{\featureinnervec}{{\mathbf{\hat f}}}
\newcommand{\featureinnervectest}{{\mathbf{\hat g}}}
\newcommand{\defvec}{\mathbf{\Phi}}
\newcommand{\deformation}{\phi}
\newcommand{\discreteDeformation}{\bm{\phi}}
\newcommand{\warp}{\mathbf{T}}
\newcommand{\DataEnergy}{\mathbf{D}}
\newcommand{\RegEnergy}{\mathbf{R}}
\newcommand{\conv}{\mathrm{conv}}
\newcommand{\dx}{\,\mathrm{d}}
\newcommand{\anisotropy}{a}
\newcommand{\proj}{\mathcal{P}}
\def\Xint#1{\mathchoice
{\XXint\displaystyle\textstyle{#1}}
{\XXint\textstyle\scriptstyle{#1}}
{\XXint\scriptstyle\scriptscriptstyle{#1}}
{\XXint\scriptscriptstyle\scriptscriptstyle{#1}}
\hspace{-1.5pt}\int}
\def\XXint#1#2#3{{\setbox0=\hbox{$#1{#2#3}{\int}$}
\vcenter{\hbox{$#2#3$}}\kern-.5\wd0}}
\def\dashint{\Xint-}
\begin{document}

\title{Image Morphing in Deep Feature Spaces: Theory and Applications}

\author{
Alexander Effland\thanks{Institute of Computer Graphics and Vision, Graz University of Technology (\url{alexander.effland@icg.tugraz.at}, \url{erich.kobler@icg.tugraz.at}, \url{pock@icg.tugraz.at})} \and Erich Kobler\footnotemark[1] \and Thomas Pock\footnotemark[1]
\and Marko Rajkovi\'c\thanks{Institute for Numerical Simulation, University of Bonn (\url{marko.rajkovic@ins.uni-bonn.de}, \url{martin.rumpf@ins.uni-bonn.de})} \and Martin Rumpf\footnotemark[2]}

\maketitle
\begin{abstract}
This paper combines image metamorphosis with deep features. 
To this end, images are considered as maps into a high-dimensional feature space and
a structure-sensitive, anisotropic flow regularization is incorporated in the metamorphosis model proposed by Miller, Trouv\'e, Younes and coworkers \cite{MiYo01,TrYo05}.
For this model a variational time discretization of the Riemannian path energy is presented and the existence of discrete geodesic paths minimizing this energy is demonstrated.
Furthermore, convergence of discrete geodesic paths to geodesic paths in the time continuous model is investigated.
The spatial discretization is based on a finite difference approximation in image space and a stable spline approximation in deformation space, the fully discrete model is optimized using the iPALM algorithm.
Numerical experiments indicate that the incorporation of semantic deep features is superior to intensity-based approaches\footnote{
This publication is an extended version of the previous conference proceeding~\cite{EfKo19} presented at SSVM 2019.}.

\end{abstract}

\section{Introduction}
\label{Introduction}
In mathematical imaging, image morphing is the problem of computing a visually appealing transition of two images such that
semantically corresponding regions are mapped onto each other.
A well-known approach for image morphing is the metamorphosis model originally introduced 
by Miller, Trouv\'e, and Younes~\cite{MiYo01,TrYo05,TrYo05a}, which generalizes
the flow of diffeomorphism model and the large deformation diffeomorphic metric mapping (LDDMM)
which dates back to the pioneering work of Arnold \cite{Ar66a} with its exploration and extension in imaging by 
Dupuis, Grenander and others \cite{DuGrMi98,BeMiTr05,JoMi00,MiTrYo02,VS09,VRRC12}.
From the perspective of the flow of diffeomorphism model,
each point of the reference image is transported to the target image in
an energetically optimal way such that the image intensity is preserved along the trajectories of the pixels.
Here, the energy measures the total dissipation of the underlying flow.
The metamorphosis model additionally allows for image intensity modulations along the trajectories
by incorporating the magnitude of these modulations, which is reflected by the integrated squared material derivative of image trajectories
as a penalization term in the energy functional.
Recently, the metamorphosis model has been extended to images on Hadamard manifolds~\cite{NePe18,EfNe19}, to reproducing kernel Hilbert spaces~\cite{RY16},
to functional shapes~\cite{CCT16} and to discrete measures~\cite{RY13}.
For a more detailed exposition of these models we refer the reader to \cite{Younes2010,MTY15} and the references therein.

Starting from the general framework for variational time discretization in geodesic calculus~\cite{RuWi12b},
a variational time discretization of the metamorphosis model for square-integrable images $L^2(\domain,\R^n)$ was proposed in~\cite{BeEf14}.
Moreover, the existence of discrete geodesic paths as well as the Mosco--convergence of the time discrete to the time continuous metamorphosis model was proven.
However, the classical metamorphosis model, its time discrete counterpart and the spatial discretization based on finite elements in~\cite{BeEf14} exhibit several drawbacks: 
\begin{itemize}
\item[-]
The comparison of images in their original gray- or color space is not invariant to natural radiometric transformations caused by
lighting or material changes, shadows etc.~and hence might lead to
a blending along the discrete geodesic path instead of flow-induced geometric transformations.
\item[-]
Texture patterns, which are important for a natural appearance of images, are often destroyed along the geodesic path due to the color-based matching.
\item[-]
Sharp interfaces such as object boundaries, which frequently coincide with depth discontinuities of a scene, are in general not preserved
along a geodesic path because of the strong smoothness implied by the homogeneous and isotropic variational prior for the 
deformation fields.
\end{itemize}
To overcome these problems originating from the inten\-sity-based matching, we propose a multiscale feature space approach incorporating the deep convolution neural network introduced in~\cite{SiZi15}. 
In detail, this convolutional neural network, which was trained to classify the ImageNet dataset~\cite{KrSu12}, extracts semantic features using $19$~weight layers, each composed of small $3\times 3$-convolu\-tion filters with subsequent nonlinear ReLU activation functions.
This network defines a feature extraction operator, where each feature map is considered as a continuous map into some higher-dimensional feature space 
consisting of vectors in $\R^C$, where $C$ ranges from $64$ to $512$ depending on the considered scale associated with a certain network layer. Throughout the paper we refer to this network as VGG network ("Visual Geometry Group in Oxford").
Compared to the original time discrete metamorphosis model~\cite{BeEf14} we advocate a metamorphosis model in a 
deep feature space, which amounts to replacing the input images by feature vectors combining image intensities and semantic information 
generated by the feature extraction operator.
To explicitly allow for discontinuities in the deformation fields, we introduce an anisotropic regularization of the time discrete deformation sequence.
Since motion discontinuities and object interfaces in images commonly coincide, the considered anisotropy solely depends on the magnitude of image gradients.

We prove the existence of discrete geodesic paths for the deep feature metamorphosis model and discuss its Mosco--convergence to  the appropriate time continuous metamorphosis model in deep feature space. This in particular implies the convergence of time discrete to time continuous geodesic paths and establishes the existence of time continuous geodesics as minimizers of the time continuous metamorphosis model.

We propose a finite difference/third order B-spline discretization for the fully discrete feature space metamorphosis model and use the iPALM algorithm~\cite{PoSa16} for the optimization,
which leads to an efficient and robust computation of morphing sequences that visually outperform the prior intensity-based finite element discretization discussed in~\cite{BeEf14}.
This scheme is significantly less sensitive to intensity modulations due to the exploitation of semantic information.

Note that this publication is an extended version of the conference proceeding~\cite{EfKo19}, in which
the model is adapted and in addition a rigorous mathematical analysis of this novel model is presented.
In fact, the morphing sequence is no longer retrieved in a post-processing step.
Instead, the color values are part of the feature vector. 
Different from the prior proceedings article, we prove the existence of time discrete geodesics in feature space, 
present a time continuous model and discuss the issue of convergence of the discrete functionals. 

\paragraph{Notation.}
Throughout this paper, we assume that the image domain~$\domain\subset\R^n$ for $n\in\{2,3\}$ is bounded and strongly Lipschitz.
We use standard notation for Lebes\-gue and Sobolev spaces from the image domain~$\domain$ to a Banach space~$X$,
i.e.~$L^p(\domain,X)$ and $H^m(\domain,X)$ and omit $X$ if the space is clear from the context.
The associated norms are denoted by $\Vert\cdot\Vert_{L^p(\domain)}$ and $\Vert\cdot\Vert_{H^m(\domain)}$, respectively, and the seminorm in $H^m(\domain)$ is given by $|\cdot|_{H^m(\domain)}$, i.e.
\[
|f|_{H^m(\domain)}=\Vert D^m f\Vert_{L^2(\domain)}\,,\quad
\Vert f\Vert_{H^m(\domain)}^2=\sum_{j=0}^m|f|_{H^j(\domain)}^2
\]
for $f\in H^m(\domain)$.
We use the notation $C^{k,\alpha}(\overline \domain,X)$ for H\"older spaces of order $k\geq0$ with regularity $\alpha\in(0,1]$, the corresponding (semi)norm is
\begin{equation*}
|f|_{C^{0,\alpha}(\overline\domain)}=\sup_{x\neq y\in\domain}\frac{|f(x)-f(y)|}{|x-y|^\alpha}\,, \quad
\Vert f\Vert_{C^{k,\alpha}(\overline\domain)}=\Vert f\Vert_{C^k(\overline\domain)}+\sum_{|\beta|=k}|D^{\beta}f|_{C^{0,\alpha}(\overline \domain)}\,.
\end{equation*}
The symmetric part of a matrix~$A\in\R^{n,n}$ is denoted by $A^\sym$, i.e.~$A^\sym=\frac{1}{2}(A+A^\top)$ and the symmetrized Jacobian of a differentiable function $\phi$ by $\varepsilon[\phi]=(D\phi)^{\sym}$.
We denote by $\GL^+(n)$ the elements of $\GL(n)$ with positive determinant, and by $\Id$ both the identity map and the identity matrix.
Finally, $\dot{f}$ refers to the temporal derivative of a differentiable function~$f$.

\paragraph{Organization.}
This paper is structured as follows: 
in Section~\ref{sec:metamorphosis}, we review the classical metamorphosis model and present its extension to deep feature spaces.
Then, in Section~\ref{sec:timeDiscrete} we introduce the time discrete deep feature metamorphosis model and prove the existence of geodesic paths.
In Section~\ref{sec:Mosco}, we present a time continuous metamorphosis model and comment on the Mosco--convergence in deep feature space.
The fully discrete model and the optimization scheme using the iPALM algorithm are presented in Section~\ref{sec:fullyDiscrete}.
Finally, in Section~\ref{sec:results} several examples demonstrate the applicability of the proposed methods to real image data.

\section{Metamorphosis model}\label{sec:metamorphosis}
In this section, we briefly review the classical flow of diffeomorphism model and the metamorphosis model as its generalization.
Then, we extend the metamorphosis model to the space of deep features, where we additionally incorporate an anisotropic regularization.

\subsection{Flow of diffeomorphism}
In what follows, we present a very short exposition of the flow of diffeomorphism model 
and refer the reader to~\cite{DuGrMi98,BeMiTr05,JoMi00,MiTrYo02} for further details.
In the flow of diffeomorphism model, the temporal change of image intensities is determined by a \emph{family of diffeomorphisms} $(\psi(t))_{t\in [0,1]}:\overline\domain\to\R^n$ 
describing a flow transporting image intensities along particle paths.
The main assumption of this model is the \emph{brightness constancy assumption}, which
is equivalent to a vanishing material derivative~$\frac{D}{\partial t}\image=\dot\image+v\cdot D\image$
along a path $(\image(t))_{t\in [0,1]}$ in the space of images, where $v(t)=\dot\psi(t)\circ\psi^{-1}(t)$ denotes the time-dependent \emph{Eulerian velocity}.
The Riemannian space of images is endowed with the following metric and path energy
\begin{equation*}
g_{\psi_t}(\dot{\psi}_t,\dot{\psi}_t)=\int_\domain L[v,v]\dx x\,, \quad
\pathenergy_{\psi_t}[(\psi_t)_{t\in [0,1]}]=\int^1_0 g_{\psi_t}(\dot{\psi}_t,\dot{\psi}_t)\dx t\,.
\end{equation*}
Note that we use $\psi_t$ as a shortcut for the function $x\mapsto\psi(t,x)$.
Here, the quadratic form $L$ is the higher order elliptic operator
\begin{equation*}
L[v,v]= \frac{\lambda}{2} (\tr \varepsilon[v])^2 + \mu \tr (\varepsilon[v]^2) + \gamma |D^m v|^2,
\end{equation*}
where $m>1+\frac{n}{2}$ and $\lambda,\mu,\gamma >0$.
Physically, the metric $g_{\psi_t}(\dot \psi_t,\dot \psi_t)$ describes the viscous dissipation in a multipolar fluid model as investigated by Ne\v{c}as and \v{S}ilhav\'y~\cite{NeSi91}.
The first two terms of the integrand represent the dissipation density in a Newtonian fluid and the third term can be regarded as a higher order measure for friction.
Following~\cite[Theorem 2.5]{DuGrMi98}, paths with a finite energy, which connect two diffeomorphisms $\psi_0=\psi_A$ and $\psi_1=\psi_B$, 
are actually one-parameter families of diffeomorphisms.
Given two image intensity functions $\image_A,\image_B\in L^2(\domain)$, an associated geodesic path is a family of images $\image=(\image(t):\domain\to \R)_{t\in [0,1]}$ with 
$\image(0,\cdot)=\image_A(\cdot)$ and $\image(1,\cdot)=\image_B(\cdot)$, which minimizes the path energy.
The resulting flow of images is given by $\image(t,\cdot)=\image_A\circ\psi^{-1}_t(\cdot)$.

\subsection{Metamorphosis model in image space}\label{sub:metamorphosis}
The metamorphosis approach originally proposed by Miller, Trouv\'e, Younes and coworkers in~\cite{MiYo01,TrYo05,TrYo05a}
generalizes the flow of diffeomorphism model by allowing for image intensity variations along motion paths 
and penalizing the squared material derivative in the metric. 
Under the assumption that the image path~$\image$ is sufficiently smooth, the metric and the path energy read as
\begin{equation*}
g(\dot\image,\dot\image)=\min_{v:\overline\domain\to\R^n}\int_\domain L[v,v]+\frac1\delta z^2\dx x\,, \quad
\pathenergy[\image]=\int_0^1 g(\dot\image(t),\dot \image(t))\dx t
\end{equation*}
for a penalization parameter $\delta>0$, where $z=\frac{D}{\partial t}\image=\dot\image+v\cdot D\image$ denotes the material derivative of $\image$.
The Lagrangian formulation of this variation of the image intensity along motion trajectories can be phrased as follows:
for all $s,t\in[0,1]$ we have 
\begin{align}\label{eq:Lagrange}
\image(t,\psi_t)-\image(s,\psi_s)=\int_s^t z(r,\psi_r)\dx r\,.
\end{align}
Hence, the flow of diffeomorphism model is the limit case of the metamorphosis model for~$\delta\to 0$.
This definition of the metric has two major drawbacks:
In general, paths in the space of images do not exhibit any smoothness properties (neither in space nor time),
and therefore the evaluation of the material derivative is not well-defined.
Moreover, since different pairs~$(v,\frac{D}{\partial t}\image)$ of velocity fields and material derivatives
can imply the same time derivative of the image path~$\dot\image$, the restriction to equivalence classes of pairs is required,
where two pairs are equivalent if and only if they induce the same temporal change of the image path~$\dot\image$.

To tackle both problems, Trouv\'e and Younes~\cite{TrYo05a} proposed
a nonlinear geometric structure in the space of RGB images~$\imageSpace\coloneqq L^2(\domain,\R^3)$.
In detail, for a given image path~$\image\in L^2([0,1],\imageSpace)$ and an associated
velocity field~$v\in L^2((0,1),\motionSpace)$, where $\motionSpace \coloneqq H^{m}(\domain,\R^n)\cap H^{1}_0(\domain,\R^n)$ denotes the velocity space,
the \emph{weak material derivative} $z\in L^2((0,1),L^2(\domain,\R^3))$ is incorporated in the model, which is implicitly given by
\begin{align}\label{eq:weakz}
\int_0^1\int_\domain\eta z\dx x\dx t=-\int_0^1\int_\domain(\partial_t\eta+\div(v\eta))\image\dx x\dx t
\end{align}
for a smooth test function $\eta\in C^{\infty}_c((0,1)\times\domain)$. 
We consider $(v,z)$ as a tangent vector in the tangent space of $\imageSpace$ at the image $\image$ and write $(v,z) \in T_\image\imageSpace$ defined by~\eqref{eq:weakz}.
Indeed, $(v,z)$ represents a variation of the image $\image$ via transport and 
change of intensity.
This (weak) formulation and the consideration of equivalence classes of motion fields and material derivatives inducing the same temporal change of the image intensity 
gives rise to the notion $H^1([0,1],\imageSpace)$ for regular paths in the space of images.
For details we refer the reader to~\cite{TrYo05a}.
The \emph{path energy in the metamorphosis model}
for a regular path $\image\in H^1([0,1],\imageSpace)$ is then defined as
\begin{equation}
\pathenergy[\image]=\int_0^1\inf_{(v,z)  \in T_u\imageSpace} \int_\domain L[v,v]+\frac{1}{\delta}z^2\dx x\dx t\,.
\label{eq:DefinitionPathenergyImage}
\end{equation}
Then, image morphing of two input images~$\image_A,\image_B\in\imageSpace$ amounts to computing a shortest geodesic path~$\image\in H^1([0,1],\imageSpace)$ in the metamorphosis model,
which is defined as a minimizer of the path energy in the class of regular curves such that $\image(0)=\image_A$ and $\image(1)=\image_B$.
The existence of a shortest geodesic is proven in~\cite[Theorem 6]{TrYo05a}.
Note that the infimum in~\eqref{eq:DefinitionPathenergyImage} is attained, which is shown in~\cite[Proposition~1 \& Theorem~2]{TrYo05a}.

\subsection{Metamorphosis model in deep feature space}
In this subsection, we extend the metamorphosis model to images as maps into a deep feature space
with the aim to increase the reliability and robustness of the resulting morphing.
To further improve the quality of the deformations, we incorporate an aniso\-tropic regularization of the deformation field.
We will 
compute geodesic paths in the \emph{feature space~$\featureSpace\coloneqq L^2(\domain,\R^{3+C}$}) for $C\geq 0$.
Here, the first part~$\image\in\imageSpace$ of a feature vector~$\feature=(\image,\tilde\feature)\in\featureSpace$ encodes the RGB image intensity values,
the remaining component~$\tilde\feature\in L^2(\domain,\R^C)$ represents deep features, which are high-dimensional local image patterns describing the local structure of the image
as a superposition on different levels of a multiscale image approximation.
Let us denote by $\proj$ the projection onto the image component of a feature, i.e.~$\proj[\feature]=\image$.
To compute the geodesic sequence in the deep feature space, we extract the features~$\FeatureOperator(\image_A),\FeatureOperator(\image_B)\in L^2(\domain,\R^C)$
from the fixed input images $\image_A,\image_B\in \imageSpace$ and define for a fixed (small)~$\eta>0$
\[
\feature_A=(\eta\image_A,\FeatureOperator(\image_A))\,,\quad
\feature_B=(\eta\image_B,\FeatureOperator(\image_B))\,.
\]
The computation of the VGG features is composed of convolution operators and nonlinear ReLU activation functions which are both continuous mappings.
Hence, it is reasonable to assume in our mathematical model that the mapping $\FeatureOperator:\imageSpace\to L^2(\domain,\R^C)$ is \emph{continuous}.
Following~\cite{SiZi15}, we define for the fully discrete model discussed in section~\ref{sec:fullyDiscrete} a discrete feature operator to
incorporate semantic information in image morphing based on convolutional neural networks, where $C$ ranges from 64 to 512.
The parameter~$\eta$ is used to scale down the RGB component mainly needed to compute the anisotropy (see below)
and to primarily focus on the actual VGG features when estimating the transport.

Next, we include an anisotropic elliptic operator~$L$ in our model to properly account for image structures such as sharp edges or corners. To this end, we consider an \emph{anisotropy operator $\anisotropy:\imageSpace\to L^\infty(\domain)$} fulfilling the following assumptions:
\begin{enumerate}[leftmargin=5.6ex,itemsep=.2\baselineskip,parsep=.1\baselineskip,label=(A\arabic*)]
\item\label{a1}\emph{boundedness and coercivity}:
$c_\anisotropy<\anisotropy[\image](x)<C_\anisotropy$ for $0< c_\anisotropy<C_\anisotropy$ 
and all $\image\in \imageSpace$ and a.e.~$x\in\domain$,
\item\label{a2}\emph{compactness}:
$\image_k\rightharpoonup\image$ in $\imageSpace$ implies $\anisotropy[\image_k]\to\anisotropy[\image]$ in $L^\infty(\domain)$,
\item\label{a3}\emph{Lipschitz continuity}:
for all neighborhoods $\mathcal{U}\subset\imageSpace$ there exists $L_\anisotropy>0$ such that
$\Vert\anisotropy[\image]-\anisotropy[\tilde\image]\Vert_{L^\infty}\leq L_\anisotropy\Vert\image-\tilde\image\Vert_{\imageSpace}$ 
for all $\image,\tilde\image\in\mathcal{U}$.
\end{enumerate}
In the numerical experiments, we use the operator~\cite{PeMa90}
\begin{equation}
\anisotropy[\image](x)=\exp\left(-\frac{\Vert(\mathcal{G}_{\rho}\ast D\mathcal{G}_{\sigma}\ast\image)(x)\Vert_2^2}{\xi_1}\right)+\xi_2,
\label{eq:anisotropy_example}
\end{equation}
for fixed $\xi_1,\xi_2>0$, where $\mathcal{G}_{\sigma},\mathcal{G}_{\rho}$ are the Gaussian kernels with standard deviation~$\sigma,\rho>0$.
Note that \eqref{eq:anisotropy_example} satisfies \ref{a1}--\ref{a3}. In fact, the anisotropy operator~$a$ is a scale factor for the elliptic operator of the deformation field, 
which nearly vanishes in the proximity of interfacial structures.
Thus, large deformation gradients are less penalized in these regions and consequently sharp edges can be better 
preserved along geodesic paths.

Now we are in the position to introduce the variational model for  deep feature metamorphosis.
Instead of generalizing the definition of regular paths and adapting the notion of a weak material derivative \eqref{eq:weakz} 
originally proposed by Trouv{\'e} and Younes, we follow the relaxed material derivative approach proposed in \cite{EfNe19}, 
in which the material derivative quantity is retrieved from a variational inequality.
In~\cite[Section~3]{EfNe19}, the equivalence of this energy functional and~\eqref{eq:DefinitionPathenergyImage} in the isotropic case has been shown.
Let $\psi$ as above denote the Lagrangian flow map induced by the Eulerian motion field with $\dot\psi_t(x)=v(t,\psi_t(x))$ and $\psi_0(x)=x$.
Then, we replace the equality \eqref{eq:Lagrange} (rephrased for the feature map $\feature$ as $\feature(t,\psi_t)-\feature(s,\psi_s)=\int_s^t \tilde z(r,\psi_r) \dx r$ with $\tilde z\in L^2((0,1)\times\Omega,\R^{3+C})$ 
being the weak material derivative) by the inequality 
\begin{align}\label{eq:zineq}
|\feature(t,\psi_t(x))-\feature(s,\psi_s(x))|\leq\int_s^t z(r,\psi_r(x))\dx r
\end{align}
for a.e.~$x\in \Omega$ and all $1\geq t > s \geq 0$, where formally the scalar valued $z=|\tilde z|$ replaces the actually vector-valued material derivative.
In fact, this inequality defines a set $\mathcal{C}(\feature)$ of admissible pairs $(v,z)$ given a path $\feature$ in $L^2([0,1],\featureSpace)$.
This relaxed approach will turn out to be very natural when it comes to lower semicontinuity of the path energy in the context of the existence proof for geodesic paths.
For more details we refer the reader to Section~\ref{sec:Mosco}.

\begin{definition}[Continuous path energy]\label{contPathEnergy}
We consider the anisotropic elliptic operator
\[
L[\tilde\anisotropy,v,v]=\tilde\anisotropy\left(\frac{\lambda}{2}(\tr\varepsilon[v])^2+\mu\tr(\varepsilon[v]^2)\right)+\gamma|D^m v|^2
\]
for an anisotropy weight~$\tilde\anisotropy\in L^\infty(\domain)$, a velocity field~$v\in\motionSpace$ and $\gamma,\mu,\lambda>0$. 
Then, we define the \emph{path energy}
\begin{equation}
\pathenergy[\feature]=\int_0^1\inf_{(v,z)\in\mathcal{C}(\feature)}\int_\domain L[\anisotropy[\proj[\feature]],v,v]+\frac{1}{\delta}z^2\dx x\dx t
\label{eq:pathEnergyDeep}
\end{equation}
for a path $\feature\in L^2([0,1],\featureSpace)$, where 
\[
\mathcal{C}(\feature)\subset L^2((0,1),\motionSpace)\times L^2((0,1) \times \domain)
\] 
denotes the set of admissible pairs of the velocity and a scalar quantity $z$ fulfilling \eqref{eq:zineq}.
\end{definition}
Let us stress that the anisotropy $\tilde\anisotropy=\anisotropy[\proj[\feature]]$
solely takes into account local RGB values and not the actual VGG features with their discriminative multiscale characteristics.

Geodesic curves~$\feature\in L^2([0,1],\featureSpace)$ in the deep feature space joining $\feature_A,\feature_B\in\featureSpace$ are 
defined as minimizers of the path energy~$\pathenergy$ among all curves
with the fixed boundary conditions~$\feature(0)=\feature_A$ and $\feature(1)=\feature_B$.

\begin{remark}\label{rem:equi}
One observes that a path~$\feature\in L^2([0,1],\featureSpace)$ in feature space with finite energy $\pathenergy[\feature]<\infty$ exhibits additional smoothness properties.
Indeed, the boundedness of $v$ in $L^2((0,1),H^m(\domain,\R^n))$ implies that the flow is in $\psi\in H^{1}((0,1),H^m(\domain,\domain))$
and, by using Sobolev embedding arguments, in $C^{0,\frac{1}{2}}([0,1],C^{1,\alpha}(\overline\domain,\overline\domain))$ with $\alpha \in (0,\min\{1,m-1-\tfrac{n}2\})$.
The same observation holds for $\psi^{-1}$ by noting that $\psi^{-1}_t(\cdot)$ is the flow associated with the backward motion field $-v(1-t,\cdot)$.
This together with the variational inequality~\eqref{eq:zineq} and $z$ in $L^2((0,1)\times\domain)$ ensures
that $t \mapsto \feature(t,\psi(t,\cdot))\in H^1((0,1),\featureSpace)\subset C^{0,\frac12}([0,1],\featureSpace)$.
Using approximation by smooth functions one shows that $t\mapsto\feature(t,\cdot)\in C^{0}([0,1],\featureSpace)$ is uniformly continuous, and by using \ref{a3}
the mapping $t\mapsto\anisotropy[\proj[\feature(t,\cdot)]]$ is well-defined and in $C^0([0,1],L^\infty(\domain))$.
\end{remark}


\section{Variational time discretization}\label{sec:timeDiscrete}
In this section, we develop a variational time discretization of the deep feature space metamorphosis model taking into account the approach presented in~\cite{RuWi12b,BeEf14}.

We define the \emph{time discrete pairwise energy} for two feature maps $\feature,\tilde\feature\in \featureSpace$ by
\[
\energy[\feature,\tilde\feature]=\min_{\deformation\in\admset}\energy^D[\anisotropy[\proj[\tilde \feature]],\feature,\tilde\feature,\deformation]\,,
\]
where $\energy^D:L^\infty(\domain)\times \featureSpace\times \featureSpace\times\admset\to\R$ is given by
\begin{equation}
\energy^D[\tilde\anisotropy,\feature,\tilde{\feature},\deformation]
=\int_\domain\tilde\anisotropy\energyDensity(D\deformation)+\gamma|D^m \deformation|^2+\frac{1}{\delta}|\tilde{\feature}\circ\deformation-\feature|^2\dx x\,.
\label{eq:energy}
\end{equation}
Here, the \emph{set of admissible deformations} is
\begin{equation*}
\admset=\{\deformation\in H^m(\domain,\domain):\det(D\deformation)>0\text{ a.e. in }\domain,~
\deformation|_{\partial\Omega}=\Id\}.
\end{equation*}
Note that the anisotropy weight only depends on the image component of the second feature~$\tilde\feature$ in the pairwise energy.
We make the following assumptions with respect to the \emph{energy density function~$\energyDensity$}:
\begin{enumerate}[leftmargin=6.5ex,itemsep=.2\baselineskip,parsep=.1\baselineskip,label=(W\arabic*)]
\item\label{W1}
$\energyDensity:\R^{n,n}\to\R^+_0$ and $\energyDensity \in C^4(\GL^+(n))$ is polyconvex and
$\energyDensity(\Id)=0$, $D\energyDensity(\Id)=0$,
\item\label{W2}
there exist constants $C_{\energyDensity,1},C_{\energyDensity,2},r_\energyDensity>0$ such that for
all $A\in\GL^+(n)$ the growth estimates
\begin{align*}
\energyDensity(A)&\geq C_{\energyDensity,1}\vert A^\sym-\Id\vert^2\,,
&&\text{if }\vert A-\Id\vert<r_\energyDensity\,,\\
\energyDensity(A)&\geq C_{\energyDensity,2}\,,
&&\text{if }\vert A-\Id\vert \geq r_\energyDensity
\end{align*}
hold true,
\item\label{W3}
for all $A\in\R^{n,n}$ the relation
\[
\frac{1}{2}D^2\energyDensity(\Id)(A,A)=\frac{\lambda}{2}(\tr A)^2+\mu \tr((A^\sym)^2)
\]
holds true.
\end{enumerate}
The first two assumptions ensure existence of a minimizing deformation in $\eqref{eq:energy}$ and the third is a consistency assumption with respect to the differential operator $L$ required to guarantee that the below defined discrete path energy is consistent 
with the time continuous path energy \eqref{eq:pathEnergyDeep}.

The particular energy density function
\begin{equation}
\energyDensity(D\deformation)=\frac{\lambda}{2}\left(e^{(\log \det (D\deformation))^2}-1\right)+\mu|\varepsilon[\deformation]-\Id|^2
\label{eq:energyDensityExample}
\end{equation}
used for all numerical experiments satisfies \ref{W1}--\ref{W3}.
The first term enforces the positivity of the determinant of the Jacobian matrix of a deformation
and favors a balance of shrinkage and growth as advocated in \cite{DrRu04,BuMo13},
while the second term penalizes large deviations of the deformation from the identity.
Here, the positivity constraint of the determinant of the Jacobian of the deformations prohibits interpenetration of matter~\cite{Ba81}.

We proceed with the definition of the discrete path energy and the discrete geodesic between two features $\feature_A=(\eta\image_A,\FeatureOperator(\image_A)),\feature_B=(\eta\image_B,\FeatureOperator(\image_B))\in \featureSpace$.
\begin{definition}[Discrete path energy]\label{def:definition_of_geodesics}
Let $K\geq 1$ and $\feature_0=\feature_A,\feature_K=\feature_B\in\featureSpace$.
The \emph{discrete path energy~$\Pathenergy^K$} for a discrete $(K+1)$-path $\featurevec=(\feature_0,\dots,\feature_K)\in \featureSpace^{K+1}$ is defined as
\begin{equation}
\Pathenergy^K[\featurevec]\coloneqq K\sum_{k=1}^K\energy[\feature_{k-1},\feature_k]\,.
\label{eq:pathenergy}
\end{equation}
A \emph{discrete geodesic path} morphing $\feature_A \in \featureSpace$ into $\feature_B \in \featureSpace$ is a discrete $(K+1)$-tuple that minimizes $\Pathenergy^K$
over all discrete paths $\featurevec=(\feature_A,\featureinnervec,\feature_B)\in \featureSpace^{K+1}$ with $\featureinnervec=(\feature_1,\ldots,\feature_{K-1})\in \featureSpace^{K-1}$.
\end{definition}
For arbitrary vectors $\featurevec=(\feature_0,\ldots,\feature_K)\in\featureSpace^{K+1}$ and $\defvec=(\deformation_1,\ldots,\deformation_K)\in\admset^K$ we set
\begin{equation}
\Pathenergy^{K,D}[\featurevec,\defvec]\coloneqq K\sum_{k=1}^{K}\energy^D[\anisotropy[\proj[\feature_k]],\feature_{k-1},\feature_k,\deformation_k]\,.
\label{eq:definitionPathenergyD}
\end{equation}

In what follows, we will investigate the existence of discrete geodesic curves in the time discrete deep feature space metamorphosis model.
To this end, we combine the proofs of the local well-posedness of the pairwise energy~$\energy$
with the existence result of a feature vector minimizing~$\Pathenergy^{K,D}$ for a fixed vector of deformations.
We remark that the structure of all proofs is similar to the corresponding proofs in~\cite{BeEf14,Ef18} and we focus on 
the adaptations necessitated by the anisotropic regularization.

The following lemma, which provides an estimate for the $H^m(\domain)$-norm of the displacement, is crucial for the well-posedness of the energy.
\begin{lemma}\label{lemm:growthControl}
Let \ref{W1}--\ref{W2} and \ref{a1} be satisfied.
Then there exists a continuous and monotonically increasing function $\theta:\R^+_0\to\R^+_0$ with $\theta(0)=0$, which only depends on
$\domain$, $m$, $n$, $\gamma$, $c_\anisotropy$, $C_{\energyDensity,1}$, $C_{\energyDensity,2}$ and $r_{\energyDensity}$, such that
\begin{equation*}
\Vert\deformation-\Id\Vert_{H^m(\domain)}\leq\theta\left(\energy^D[\anisotropy[\proj[\tilde\feature]],\feature,\tilde\feature,\deformation]\right)
\end{equation*}
for all $\feature, \tilde\feature \in \featureSpace$ and all $\deformation \in \admset$. Furthermore,
$\theta(x)\leq C(x+x^2)^{\frac12}$ for a constant $C>0$.
\end{lemma}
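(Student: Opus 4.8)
The plan is to bound the $H^m(\domain)$-seminorm and the $L^2$-part of the displacement separately by the energy, and then combine them using the boundary condition $\deformation|_{\partial\domain}=\Id$ together with a Poincar\'e-type inequality. The key observation is that every term in $\energyDensity^D$ is nonnegative (using \ref{a1} to get $\anisotropy[\proj[\tilde\feature]]>c_\anisotropy>0$ and \ref{W1} to get $\energyDensity\geq 0$), so each individual contribution is controlled by the full energy $\energyDensity^D$.

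First I would control the top-order term. Since $\anisotropy\energyDensity(D\deformation)\geq 0$, the energy dominates $\gamma|D^m\deformation|^2$ pointwise, hence
\begin{equation*}
|\deformation-\Id|_{H^m(\domain)}^2=\Vert D^m\deformation\Vert_{L^2(\domain)}^2\leq\tfrac1\gamma\,\energyDensity^D[\anisotropy[\proj[\tilde\feature]],\feature,\tilde\feature,\deformation]\,,
\end{equation*}
using that $D^m\Id=0$ for $m>1$. Next I would control the first-order symmetric part. Using \ref{a1} to write $\anisotropy\energyDensity\geq c_\anisotropy\,\energyDensity$ and then invoking the growth estimate \ref{W2}: on the region where $|D\deformation-\Id|<r_\energyDensity$ we get $\energyDensity(D\deformation)\geq C_{\energyDensity,1}|\varepsilon[\deformation]-\Id|^2$, and on the complementary region $\energyDensity\geq C_{\energyDensity,2}$ bounds the measure of that region. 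A short argument splitting $\domain$ into these two sets yields an estimate of the form $\Vert\varepsilon[\deformation]-\Id\Vert_{L^2(\domain)}^2\leq C\,(E+E^{1/2})$ with $E\coloneqq\energyDensity^D$, where the $E^{1/2}$ arises from the large-deformation region via Cauchy--Schwarz applied to $\int|\varepsilon[\deformation]-\Id|\leq|\domain|^{1/2}\Vert\varepsilon[\deformation]-\Id\Vert_{L^2}$ after bounding the measure.

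Having controlled $\Vert\varepsilon[\deformation]-\Id\Vert_{L^2}$, the step that does the real work is passing from the symmetrized gradient to the full displacement in $H^1$, and then to $H^m$. Because $\deformation-\Id$ vanishes on $\partial\domain$, I can apply Korn's inequality (the second Korn inequality on the space $H^1_0$) to obtain $\Vert D(\deformation-\Id)\Vert_{L^2(\domain)}\leq C_K\Vert\varepsilon[\deformation-\Id]\Vert_{L^2(\domain)}=C_K\Vert\varepsilon[\deformation]-\Id\Vert_{L^2(\domain)}$, with the Korn constant $C_K$ depending only on $\domain$ and $n$; a subsequent Poincar\'e inequality (again using the zero boundary trace) controls $\Vert\deformation-\Id\Vert_{L^2(\domain)}$ by the same quantity. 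Finally, a Gagliardo--Nirenberg interpolation inequality between $H^1(\domain)$ and $H^m(\domain)$ bounds the intermediate seminorms $|\deformation-\Id|_{H^j}$ for $1\le j\le m-1$, so that the full norm $\Vert\deformation-\Id\Vert_{H^m(\domain)}^2$ is dominated by a constant times $(|\deformation-\Id|_{H^1}^2+|\deformation-\Id|_{H^m}^2)$, both already bounded by $C(E+E^2)^{1/2}$-type expressions.

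I expect the main obstacle to be the bookkeeping of constants and the correct form of the final function $\theta$, rather than any single hard inequality. Concretely, the large-deformation region contributes an $E^{1/2}$-type term while the top-order and small-deformation terms contribute $E$-type terms, and one must verify that the combination can be absorbed into a single continuous, monotone $\theta$ with $\theta(0)=0$ and growth $\theta(x)\le C(x+x^2)^{1/2}$; squaring and keeping track of which powers of $E$ dominate for small versus large $E$ is the delicate accounting. A secondary subtlety is ensuring the Korn and Poincar\'e constants depend only on the listed data ($\domain,m,n,\gamma,c_\anisotropy,C_{\energyDensity,1},C_{\energyDensity,2},r_\energyDensity$) and not on $\deformation$ itself, which follows because these inequalities hold uniformly on $H^1_0(\domain,\R^n)$. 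Once these pieces are assembled, defining $\theta(x)\coloneqq C(x+x^2)^{1/2}$ for a suitable constant $C$ completes the proof.
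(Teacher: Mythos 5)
Your overall architecture tracks the paper's proof almost step for step: Gagliardo--Nirenberg to reduce $\Vert\deformation-\Id\Vert_{H^m(\domain)}$ to the $L^2$-norm plus the top seminorm, the bound $|\deformation|_{H^m(\domain)}^2\leq E/\gamma$, the splitting of $\domain$ into $\mathcal{S}=\{x:|D\deformation(x)-\Id|<r_\energyDensity\}$ and its complement via \ref{W2} and \ref{a1}, and Korn plus Poincar\'e to pass from $\varepsilon[\deformation]-\Id$ back to the displacement. However, there is a genuine gap exactly at the step you wave through as ``a short argument'': the contribution of the large-deformation set $\domain\setminus\mathcal{S}$ to $\Vert\varepsilon[\deformation]-\Id\Vert_{L^2(\domain)}^2$. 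The measure bound $|\domain\setminus\mathcal{S}|\leq E/(c_\anisotropy C_{\energyDensity,2})$ by itself says nothing about how large $|\varepsilon[\deformation](x)-\Id|$ can be on that set, and your Cauchy--Schwarz step points the wrong way: $\int_{\domain\setminus\mathcal{S}}|\varepsilon[\deformation]-\Id|\dx x\leq|\domain\setminus\mathcal{S}|^{\frac12}\Vert\varepsilon[\deformation]-\Id\Vert_{L^2(\domain)}$ controls the $L^1$-mass by the very $L^2$-quantity you are trying to estimate, and an $L^1$ bound cannot dominate $\int_{\domain\setminus\mathcal{S}}|\varepsilon[\deformation]-\Id|^2\dx x$ without a pointwise bound anyway. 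Your claimed intermediate estimate $\Vert\varepsilon[\deformation]-\Id\Vert_{L^2(\domain)}^2\leq C(E+E^{1/2})$ is moreover incompatible with the asserted growth $\theta(x)\leq C(x+x^2)^{\frac12}$: for small $E$ it would only yield $\theta\sim E^{1/4}$ rather than $E^{1/2}$, and it also contradicts your own concluding $C(E+E^2)^{1/2}$ bookkeeping.

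The missing idea is an a priori $L^\infty$ bound on $D\deformation$, which the paper extracts from the structure of $\admset$: admissible deformations map $\domain$ into itself, so $\Vert\deformation-\Id\Vert_{L^2(\domain)}\leq C(\domain)$ holds for free, independently of the energy. Combining this with $|\deformation|_{H^m(\domain)}\leq\sqrt{E/\gamma}$ via Gagliardo--Nirenberg and then invoking the embedding $H^m(\domain,\domain)\hookrightarrow C^{1,\alpha}(\overline\domain,\overline\domain)$ (this is where $m>1+\tfrac{n}{2}$ enters) gives $\Vert\deformation-\Id\Vert_{C^{1,\alpha}(\overline\domain)}\leq C+C\sqrt{E}$. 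On $\domain\setminus\mathcal{S}$ one then bounds the integrand pointwise by $\left(C+C\sqrt{E}\right)^2$ and multiplies by the measure bound, so that $\int_{\domain\setminus\mathcal{S}}|\varepsilon[\deformation]-\Id|^2\dx x\leq\frac{E}{c_\anisotropy C_{\energyDensity,2}}\left(C+CE\right)$; this is the source of the $E^2$ term behind the stated growth of $\theta$, which your $E^{1/2}$ cannot replace. With this correction --- note that you never use the fact $\deformation(\domain)\subseteq\domain$, which is indispensable here to avoid circularity, since the full $H^m$-norm is precisely what is being estimated --- the remainder of your argument (Korn's inequality on functions with identity boundary values, Poincar\'e, and interpolation of the intermediate seminorms) goes through exactly as in the paper.
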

\begin{proof}
Set $\overline\energy=\energy^D[\anisotropy[\proj[\tilde\feature]], \feature, \tilde\feature, \deformation]$.
An application of the Gagliardo--Nirenberg inequality~\cite{Ni66} yields
\begin{equation}
\Vert\deformation-\Id\Vert_{H^m(\domain)}\leq C(\Vert\deformation-\Id\Vert_{L^2(\domain)}+|\deformation-\Id|_{H^m(\domain)})\,.
\label{eq:mGrowthDisplacement}
\end{equation}
The last term in \eqref{eq:mGrowthDisplacement} is bounded by
\begin{equation}
|\deformation-\Id|_{H^m(\domain)}=|\deformation|_{H^m(\domain)}\leq\sqrt{\tfrac{\overline{\energy}}{\gamma}}\,.
\label{eq:displacementHigherOrderControl}
\end{equation}
By using the embedding of $H^m(\domain,\domain)$ into $C^{1,\alpha}(\overline\domain,\overline\domain)$ and the uniform boundedness of the minimizing sequence in $L^2(\domain,\domain)$ we get 
$\Vert\deformation-\Id\Vert_{C^{1,\alpha}(\overline\domain)}
\leq C+C\sqrt{\overline{\energy}}\,.$
To control the lower order term appearing on the right-hand side of~\eqref{eq:mGrowthDisplacement}, we define
$\mathcal{S}=\{x\in\domain:|D\deformation(x)-\Id|<r_\energyDensity\}$ and use \ref{a1} and \ref{W2} to obtain
\[
|\domain\backslash\mathcal{S}|c_\anisotropy C_{\energyDensity,2}\leq\int_\domain\anisotropy[\proj[\tilde\feature]]\energyDensity(D\deformation) \dx x\leq\overline{\energy}\,,
\]
which implies $|\domain\backslash\mathcal{S}|\leq\frac{\overline{\energy}}{c_\anisotropy C_{\energyDensity,2}}$.
Hence, by the embedding $H^m(\domain,\domain)\hookrightarrow C^1(\overline\domain,\overline\domain)$ 
we infer
\begin{align}
&\int_\domain|\varepsilon[\deformation]-\Id|^2 \dx x\notag\\
\leq&\int_\mathcal{S}\frac{\energyDensity(D\deformation)}{C_{\energyDensity,1}}\dx x +|\domain\backslash\mathcal{S}|\left(C+C\sqrt{\overline{\energy}}\right)^2 \notag\\
\leq&\frac{\overline{\energy}}{C_{\energyDensity,1}}+\frac{\overline{\energy}}{c_\anisotropy C_{\energyDensity,2}}\left(C+C\overline{\energy}\right)\,.
\label{eq:convergenceLowerLTwo} 
\end{align}
We remark that the inequality
\begin{equation}
\Vert\deformation-\Id\Vert_{L^2(\domain)}\leq C\Vert\varepsilon[\deformation]-\Id\Vert_{L^2(\domain)}
\label{eq:KornEstimate}
\end{equation}
holds true, which follows from Korn's inequality and the Poincar\'e inequality.
Thus, the lemma follows by combining \eqref{eq:mGrowthDisplacement}, \eqref{eq:displacementHigherOrderControl}, \eqref{eq:convergenceLowerLTwo} and \eqref{eq:KornEstimate}.
\end{proof}
\begin{proposition}[Well-posedness of $\energy$]\label{prop:wellPosednessEnergy}
Let $\feature\in\featureSpace$ be a fixed feature vector.
Under the assumptions \ref{W1}--\ref{W2} and \ref{a1}, there exists a constant~$C_\energy$ (depending on
$\domain,m,n,\gamma,\delta,\mu,\lambda,c_\anisotropy,C_{\energyDensity,1},C_{\energyDensity,2},r_{\energyDensity}$) such that for every fixed
\begin{equation}
\tilde\feature\in\left\{g\in \featureSpace:\Vert\feature-g\Vert_{\featureSpace}<C_\energy\right\}
\label{eq:closenessFeature}
\end{equation}
there exists $\deformation\in\admset$ which minimizes $\energy^D[\anisotropy[\proj[\tilde\feature]],\feature,\tilde\feature,\cdot]$
defined in \eqref{eq:energy} and $\deformation$ is a $C^1(\domain,\domain)$-diffeo\-morphism.
\end{proposition}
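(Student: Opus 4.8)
The plan is to run the direct method in the calculus of variations, where the smallness of the threshold~$C_\energy$ plays a double role: it keeps the minimal energy small enough that any near-minimizer remains in a $C^1$-neighborhood of the identity, which simultaneously secures admissibility of the limit and delivers the diffeomorphism property. Throughout, the anisotropy weight $\anisotropy[\proj[\tilde\feature]]\in L^\infty(\domain)$ is \emph{fixed}, since $\tilde\feature$ is fixed, so only \ref{a1} enters (through Lemma~\ref{lemm:growthControl}) and neither \ref{a2} nor \ref{a3} is needed here.

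First I would fix $\tilde\feature$ as in~\eqref{eq:closenessFeature} and observe that $\deformation=\Id\in\admset$ is an admissible competitor: since $\energyDensity(\Id)=0$ by~\ref{W1} and $D^m\Id=0$, one has $\energy^D[\anisotropy[\proj[\tilde\feature]],\feature,\tilde\feature,\Id]=\frac1\delta\Vert\tilde\feature-\feature\Vert_\featureSpace^2<\frac{C_\energy^2}\delta$. Hence the infimum $m_0\coloneqq\inf_{\deformation\in\admset}\energy^D[\anisotropy[\proj[\tilde\feature]],\feature,\tilde\feature,\deformation]$ is finite and controlled by $C_\energy$. Taking a minimizing sequence $(\deformation_j)_j\subset\admset$, Lemma~\ref{lemm:growthControl} yields the uniform bound $\Vert\deformation_j-\Id\Vert_{H^m(\domain)}\leq\theta(\energy^D[\dots,\deformation_j])$ because $\theta$ is monotone and the energies are bounded. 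Thus, up to a subsequence, $\deformation_j\rightharpoonup\deformation$ weakly in $H^m(\domain,\domain)$, and by the compact Sobolev embedding $H^m(\domain)\hookrightarrow C^{1,\alpha}(\overline\domain)$ (valid since $m>1+\tfrac n2$) the convergence $\deformation_j\to\deformation$ is strong in $C^1(\overline\domain,\overline\domain)$; in particular $D\deformation_j\to D\deformation$ uniformly.

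Next I would settle admissibility and regularity of the limit. Choosing $C_\energy$ small and using $\theta(x)\le C(x+x^2)^{1/2}$ with $m_0<C_\energy^2/\delta$ forces $\Vert\deformation_j-\Id\Vert_{C^1}$ below any prescribed threshold for large $j$; hence $\det D\deformation_j$ stays uniformly close to $1$, so uniformly positive, and the same bound passes to the limit, giving $\det D\deformation>0$ on $\overline\domain$. The boundary condition $\deformation|_{\partial\domain}=\Id$ survives the uniform convergence. A $C^1$ map that equals the identity on $\partial\domain$, has everywhere positive Jacobian, and is $C^1$-close to $\Id$ is injective (a standard perturbation-of-identity argument combined with degree theory) and, by the inverse function theorem, a $C^1$-diffeomorphism of $\domain$ onto itself; this places $\deformation\in\admset$ and yields the claimed regularity.

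Finally I would establish lower semicontinuity of $\energy^D$ along this convergence. The term $\gamma\int_\domain|D^m\deformation|^2$ is convex, hence weakly lower semicontinuous in $H^m$. The anisotropy term $\int_\domain\anisotropy[\proj[\tilde\feature]]\,\energyDensity(D\deformation)$ converges, since the weight is a fixed $L^\infty$ function, the arguments $D\deformation_j$ lie in a fixed compact subset of $\GL^+(n)$, and $\energyDensity$ is continuous there by~\ref{W1}. The delicate part is the fidelity term $\frac1\delta\int_\domain|\tilde\feature\circ\deformation-\feature|^2$, because $\tilde\feature$ is merely $L^2$ and composition with a varying map is not continuous on $L^2$ without additional structure; I expect this to be the main obstacle. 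I would handle it by density: approximate $\tilde\feature$ by a continuous $w$ in $\featureSpace$, so that $w\circ\deformation_j\to w\circ\deformation$ uniformly by uniform continuity of $w$ and uniform convergence of $\deformation_j$, while the approximation error is controlled by the change of variables $\int_\domain|(\tilde\feature-w)\circ\deformation_j|^2=\int_\domain|(\tilde\feature-w)(y)|^2\,|\det D\deformation_j^{-1}(y)|\dx y\leq C\,\Vert\tilde\feature-w\Vert_\featureSpace^2$, valid because each $\deformation_j$ is a bijection of $\domain$ with $\det D\deformation_j$ uniformly bounded above and below. Letting $j\to\infty$ and then $w\to\tilde\feature$ shows the fidelity term is in fact continuous along the sequence. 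Combining the three contributions gives $\energy^D[\dots,\deformation]\le\liminf_j\energy^D[\dots,\deformation_j]=m_0$, so $\deformation$ is the sought minimizer.
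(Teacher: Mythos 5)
Your proposal is correct and follows essentially the same route as the paper's proof: the identity competitor to bound the infimum by $C_\energy^2/\delta$, Lemma~\ref{lemm:growthControl} plus the compact embedding $H^m\hookrightarrow C^{1,\alpha}$ to keep the minimizing sequence $C^1$-close to $\Id$ with uniformly positive Jacobians, and density of smooth (or continuous) maps together with the transformation formula to pass to the limit in the fidelity term. The only cosmetic differences are that the paper cites \cite[Theorem 5.5-2]{Ci88} where you invoke a perturbation-of-identity/degree argument for the diffeomorphism property, and it uses a Lipschitz bound $\Vert D\tilde\feature^i\Vert_{L^\infty}\Vert\deformation^j-\deformation\Vert_{L^2(\domain)}$ where you use uniform continuity of the approximant.
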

\begin{proof}
For fixed $\feature\in\featureSpace$, let $\tilde\feature$ be a feature vector satisfying~\eqref{eq:closenessFeature} for a constant~$C_\energy$ specified below.
Let $\{\deformation^j\}_{j\in\N}\in\admset$ be any sequence such that the mismatch $\energy^D[\anisotropy[\proj[\tilde\feature]],\feature,\tilde\feature,\deformation^j]$ converges to 
$\underline{\mathbf{W}}=\inf_{\deformation\in\admset}\energy^D[\anisotropy[\proj[\tilde\feature]],\feature,\tilde\feature,\deformation]\geq 0.$
Since $\Id\in\admset$ we can deduce using~\ref{W1} that
\begin{equation*}
\underline{\mathbf{W}}\leq\energy^D[\anisotropy[\proj[\tilde\feature]],\feature,\tilde\feature,\deformation^j]
\leq\overline{\mathbf{W}}\coloneqq\energy^D[\anisotropy[\proj[\tilde\feature]],\feature,\tilde\feature,\Id]=\tfrac{1}{\delta}\Vert\tilde\feature-\feature\Vert_{\featureSpace}^2<\frac{C_\energy^2}{\delta}
\end{equation*}
for all $j\in\N$.
Using again the Gagliardo--Nirenberg inequality we infer that
$\{\deformation^j\}_{j\in\N}$ is uniformly bounded in~$H^m(\domain,\domain)$ because of
the estimate $|\deformation^j|_{H^m(\domain)}^2\leq\frac{\overline{\mathbf{W}}}{\gamma}$. 
Due to the reflexivity of $H^m(\domain,\domain)$ there exists a weakly convergent subsequence (not relabeled) such that $\deformation^j\rightharpoonup\deformation$ in $H^m(\domain,\domain)$.
By using the Sobolev embedding theorem as well as the Arzel\`a--Ascoli theorem we can additionally infer that for a subsequence (again not relabeled)
$\deformation^j\to\deformation$ in $C^{1,\alpha}(\overline\domain,\overline\domain)$ for $\alpha\in(0,m-1-\frac{n}{2})$ holds true.
Then, Lemma~\ref{lemm:growthControl} implies
\[
\Vert\deformation^j-\Id\Vert_{C^1(\overline\domain)}\leq C\theta(\overline{\mathbf{W}})<C\theta(\delta^{-1}C_{\energy}^2)\,.
\]
Thus, by choosing $C_\energy$ sufficiently small and taking into account the Lipschitz continuity of the determinant we obtain
$
\Vert\det(D\deformation^j)-1\Vert_{L^\infty(\domain)}\leq C_{\det}
$
for a constant $C_{\det}\in(0,1)$ and all $j\in\N$, which implies $\det(D\deformation^j)\geq C>0$ for a constant $C$.
Note that all estimates remain valid for the limit deformation~$\deformation\in\admset$.
By~\cite[Theorem 5.5-2]{Ci88} the deformations~$\{\deformation^j\}_{j\in\N}$ and $\deformation$ are $C^1(\domain,\domain)$-diffeomorphisms.
Finally, \ref{W1} and the lower semicontinuity of the seminorm imply
\begin{equation*}
\liminf_{j\to\infty}\int_\domain\anisotropy[\proj[\tilde\feature]]\energyDensity(D\deformation^j)+\gamma|D^m\deformation^j|^2\dx x
\geq\int_\domain\anisotropy[\proj[\tilde\feature]]\energyDensity(D\deformation)+\gamma|D^m\deformation|^2\dx x\,.
\end{equation*}

It remains to verify that
\begin{equation}
\Vert\tilde\feature\circ\deformation^j-\feature\Vert_{\featureSpace}\to\Vert\tilde\feature\circ\deformation-\feature\Vert_{\featureSpace}
\label{eq:mismatchConvergence}
\end{equation}
as $j\to\infty$.
To this end, we approximate $\tilde\feature$ by smooth functions $\tilde\feature^i\in C^\infty(\domain,\R^{3+C})$ with
$\Vert\tilde\feature-\tilde\feature^i\Vert_{\featureSpace}\to 0$.
Then, using the transformation formula we obtain
\begin{align*}
&\Vert\tilde\feature\circ\deformation^j-\tilde\feature\circ\deformation\Vert_{\featureSpace}\\
\leq&
\Vert\tilde\feature\circ\deformation^j-\tilde\feature^i\circ\deformation^j\Vert_{\featureSpace}+\Vert\tilde\feature^i\circ\deformation^j-\tilde\feature^i\circ\deformation\Vert_{\featureSpace}
+\Vert\tilde\feature^i\circ\deformation-\tilde\feature\circ\deformation\Vert_{\featureSpace}\\
\leq&
\Vert\tilde\feature-\tilde\feature^i\Vert_{\featureSpace}\Big(\Vert\det(D(\deformation^j)^{-1})\Vert_{L^\infty(\domain)}^{\frac12}
+\Vert\det(D\deformation^{-1})\Vert_{L^\infty(\domain)}^{\frac12}\Big)\\
&+\Vert D\tilde\feature_i\Vert_{L^\infty(\domain)}\Vert\deformation^j-\deformation\Vert_{L^2(\domain)}\,,
\end{align*}
where $\det(D(\deformation^j))^{-1}$ and $\det(D(\deformation))^{-1}$ are pointwise estimated by $(1-C_{\det})^{\frac12}\,.$
Finally, by first choosing~$i$ and then~$j$ we obtain~\eqref{eq:mismatchConvergence} and thereby verify the claim.
\end{proof}

This proposition guarantees the existence of an admissible vector of deformations~$\defvec\in\admset^K$
such that $\Pathenergy^{K,D}[\featurevec,\defvec]=\Pathenergy^K[\featurevec]$
provided that each pair of features $(\feature_k,\feature_{k+1})$ contained in
$\featurevec=(\feature_0,\ldots,\feature_K)\in\featureSpace^{K+1}$ satisfies~\eqref{eq:closenessFeature}.

In what follows, we prove the existence of an energy minimizing vector of features for a fixed vector of deformations.
\begin{proposition}\label{prop:existenceImageInnerVec}
Let $K\geq 2$, $\feature_A,\feature_B\in\featureSpace$ and $\defvec=(\deformation_1,\ldots,\deformation_K)\in\admset^K$ be fixed.
We assume that the deformations satisfy
\begin{equation}
\min_{k\in\{1,\ldots,K\}}\min_{x\in\overline\domain}\det(D\deformation_k(x))\geq c_{\det}
\label{eq:uniformDetBoundMinimizer}
\end{equation}
for a constant $c_{\det}>0$.
Then, under the assumptions \ref{W1}--\ref{W2} and \ref{a1}--\ref{a2} there exists a feature vector $\featurevec$
with $\feature_0=\feature_A$ and $\feature_K=\feature_B$ such that
\begin{equation*}
\Pathenergy^{K,D}[\featurevec,\defvec]=\inf\left\{\Pathenergy^{K,D}[(\feature_A,\featureinnervectest,\feature_B),\defvec]:\featureinnervectest\in \featureSpace^{K-1}\right\}\,.
\end{equation*}
\end{proposition}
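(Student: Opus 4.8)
The plan is to apply the direct method in the calculus of variations to the functional $\featureinnervec=(\feature_1,\dots,\feature_{K-1})\mapsto\Pathenergy^{K,D}[(\feature_A,\featureinnervec,\feature_B),\defvec]$ on $\featureSpace^{K-1}$. First I would observe that, since $\defvec$ is fixed, the regularization contributions $\gamma\int_\domain|D^m\deformation_k|^2\dx x$ are constants and may be discarded, so that the only feature-dependent terms are the fidelity terms $\frac1\delta\Vert\feature_k\circ\deformation_k-\feature_{k-1}\Vert_\featureSpace^2$ and the anisotropy terms $\int_\domain\anisotropy[\proj[\feature_k]]\,\energyDensity(D\deformation_k)\dx x$, the latter being nonnegative by \ref{W1}. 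I would then fix a minimizing sequence $\{\featureinnervec^j\}_{j\in\N}$ with $\featureinnervec^j=(\feature_1^j,\dots,\feature_{K-1}^j)$ and denote by $\Energy^*$ the finite limit of the corresponding energy values along the sequence.

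The first substantive step is coercivity, i.e.\ a uniform bound on $\Vert\feature_k^j\Vert_\featureSpace$. Here I would use that, by the Sobolev embedding $H^m(\domain,\domain)\hookrightarrow C^1(\overline\domain,\overline\domain)$, each fixed $\deformation_k$ has a bounded Jacobian, hence $\det(D\deformation_k)$ is bounded from above, while \eqref{eq:uniformDetBoundMinimizer} provides the lower bound $c_{\det}>0$; moreover $\deformation_k|_{\partial\domain}=\Id$ together with positivity of the determinant forces $\deformation_k(\domain)=\domain$. The transformation formula then gives $\Vert\feature\circ\deformation_k\Vert_\featureSpace^2=\int_\domain|\feature(y)|^2\det(D\deformation_k^{-1})\dx y$, so that $\Vert\feature\circ\deformation_k\Vert_\featureSpace$ is comparable to $\Vert\feature\Vert_\featureSpace$ up to the determinant bounds. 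Starting from the fixed datum $\feature_0=\feature_A$ and using the triangle inequality $\Vert\feature_k^j\circ\deformation_k\Vert_\featureSpace\leq\Vert\feature_k^j\circ\deformation_k-\feature_{k-1}^j\Vert_\featureSpace+\Vert\feature_{k-1}^j\Vert_\featureSpace$, where the first summand is controlled by $(\delta\Energy^*)^{1/2}$, I would argue by induction on $k$ that all $\Vert\feature_k^j\Vert_\featureSpace$ are bounded uniformly in $j$. Reflexivity of $\featureSpace$ then yields, for a non-relabeled subsequence, weak limits $\feature_k^j\rightharpoonup\feature_k$ in $\featureSpace$ for $k=1,\dots,K-1$.

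It remains to verify weak lower semicontinuity of the two families of terms. For the anisotropy terms I would use that $\proj$ is linear and bounded, so $\proj[\feature_k^j]\rightharpoonup\proj[\feature_k]$ in $\imageSpace$; assumption \ref{a2} then gives $\anisotropy[\proj[\feature_k^j]]\to\anisotropy[\proj[\feature_k]]$ strongly in $L^\infty(\domain)$, and since $\energyDensity(D\deformation_k)\in L^1(\domain)$ is fixed, the products converge, so these terms pass to the limit with equality. For the fidelity terms I would note that the composition operator $\feature\mapsto\feature\circ\deformation_k$ is linear and, by the transformation formula together with the determinant bounds, bounded on $\featureSpace$, hence weakly continuous; therefore $\feature_k^j\circ\deformation_k-\feature_{k-1}^j\rightharpoonup\feature_k\circ\deformation_k-\feature_{k-1}$ in $\featureSpace$ (with $\feature_0=\feature_A$ and $\feature_K=\feature_B$ fixed), and weak lower semicontinuity of the norm gives $\Vert\feature_k\circ\deformation_k-\feature_{k-1}\Vert_\featureSpace^2\leq\liminf_{j\to\infty}\Vert\feature_k^j\circ\deformation_k-\feature_{k-1}^j\Vert_\featureSpace^2$. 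Combining the convergence of the anisotropy terms with the lower semicontinuity of the fidelity terms shows that $(\feature_A,\feature_1,\dots,\feature_{K-1},\feature_B)$ attains the infimum, which proves the claim.

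I would expect the coercivity step to be the main obstacle, since it is the only place where the uniform lower determinant bound \eqref{eq:uniformDetBoundMinimizer} is genuinely needed: without a positive lower bound the composition $\feature\mapsto\feature\circ\deformation_k$ could fail to control $\Vert\feature\Vert_\featureSpace$, as the change-of-variables weight $\det(D\deformation_k^{-1})$ could degenerate, and then both the coercivity estimate and the weak continuity of the composition operator would break down. The anisotropy terms, by contrast, behave favorably precisely because of the strong $L^\infty$-convergence granted by \ref{a2}.
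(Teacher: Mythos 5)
Your proposal is correct and follows essentially the same direct-method argument as the paper's proof: a minimizing sequence, a uniform bound on the features by a telescoping induction anchored at a fixed endpoint, weak compactness of $\featureSpace^{K-1}$, strong $L^\infty(\domain)$-convergence of the anisotropies via \ref{a2} against the fixed integrable density $\energyDensity(D\deformation_k)$, weak continuity of $\feature\mapsto\feature\circ\deformation_k$ via the transformation formula (your bounded-linear-operator formulation is exactly the paper's explicit duality computation $\int_\domain(\feature^j_k-\feature_k)\cdot(g(\det(D\deformation_k))^{-1})\circ\deformation_k^{-1}\dx x\to 0$), and weak lower semicontinuity of the $L^2$-norm for the mismatch terms. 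The only deviations are minor: you run the induction forward from $\feature_A$, which uses the upper bound on $\det(D\deformation_k)$ coming from $H^m(\domain,\domain)\hookrightarrow C^1(\overline\domain,\overline\domain)$, whereas the paper runs it backward from $\feature_B$ via $\Vert\feature_k^j\Vert_{\featureSpace}\leq\sqrt{\delta\,\overline{\Pathenergy^{K,D}}/K}+c_{\det}^{-\frac12}\Vert\feature_{k+1}^j\Vert_{\featureSpace}$, which uses \eqref{eq:uniformDetBoundMinimizer} (so your closing diagnostic slightly misattributes where $c_{\det}$ is needed—in your variant it enters through the boundedness of the composition operator and the finiteness of the energy, not the coercivity estimate itself); and you assert rather than verify finiteness of the infimum, which the paper establishes by bounding the energy of the constant extension $(\feature_A,(\feature_A,\ldots,\feature_A),\feature_B)$ using \ref{a1}, \eqref{eq:uniformDetBoundMinimizer} and the transformation formula—a one-line check that your stated determinant bounds already cover.
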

\begin{proof}
We consider a minimizing sequence of features $\featureinnervec^j=(\feature^j_1,\ldots,\feature^j_{K-1})\in\featureSpace^{K-1}$, $j\in\N$, for the energy
$\featureinnervectest \mapsto \Pathenergy^{K,D}[(\feature_A,\featureinnervectest,\feature_B),\defvec]$. Then,
\begin{equation*}
0\leq\Pathenergy^{K,D}[(\feature_A,\featureinnervec^j,\feature_B),\defvec]
\leq\Pathenergy^{K,D}[(\feature_A,(\feature_A,\ldots,\feature_A),\feature_B),\defvec]\eqqcolon\overline{\Pathenergy^{K,D}}\,.
\end{equation*}
A straightforward computation reveals
\begin{align*}
\overline{\Pathenergy^{K,D}}
\leq&K\sum_{k=1}^K C_\anisotropy\Vert\energyDensity(D\deformation_k)\Vert_{L^1(\domain)}+\gamma \Vert\deformation_k\Vert_{H^m(\domain)}^2\\
&+\frac{CK^2}{\delta}\left((1+c_{\det}^{-1})\Vert\feature_A\Vert_{\featureSpace}^2+c_{\det}^{-1}\Vert\feature_B\Vert_{\featureSpace}^2\right)\,,
\end{align*}
where we used \ref{a1},~\eqref{eq:uniformDetBoundMinimizer} and the transformation formula. 
Furthermore, again by~\eqref{eq:uniformDetBoundMinimizer} one obtains
\begin{equation}
\Vert\feature_k^j\Vert_{\featureSpace}\leq\Vert\feature_{k+1}^j\circ\deformation_{k+1}-\feature_k^j\Vert_{\featureSpace}+\Vert\feature_{k+1}^j\circ\deformation_{k+1}\Vert_{\featureSpace}
\leq\sqrt{\tfrac{\delta\overline{\Pathenergy^{K,D}}}{K}}+c_{\det}^{-\frac{1}{2}}\Vert\feature_{k+1}^j\Vert_{\featureSpace}\,.
\label{eq:uniformBoundFeaturesInduction}
\end{equation}
Thus, an induction argument (starting from $k=K-1$) shows that $\featureinnervec^j=(\feature^j_1,\ldots,\feature^j_{K-1})$ is uniformly bounded in $\featureSpace^{K-1}$ independently of $j$, which
implies for a subsequence (not relabeled) $\featureinnervec^j\rightharpoonup\featureinnervec$ in $\featureSpace^{K-1}$.

In what follows, we prove the weak lower semicontinuity of the discrete path energy along the minimizing sequence.
We observe that~\ref{a2} implies $\anisotropy[\proj[\feature^j_k]]\to\anisotropy[\proj[\feature_k]]$ in $L^\infty(\domain)$, which yields
\[
\lim_{j\to\infty}\int_\domain\anisotropy[\proj[\feature^j_k]]\energyDensity(D\deformation_k)\dx x=\int_\domain\anisotropy[\proj[\feature_k]]\energyDensity(D\deformation_k)\dx x
\]
for every $k=1,\ldots,K$.
It remains to verify the weak lower semicontinuity of the matching functional, i.e.
\begin{equation}
\Vert\feature_k\circ\deformation_k-\feature_{k-1}\Vert_{\featureSpace}^2\leq\liminf_{j\to\infty}\Vert\feature^j_k\circ\deformation_k-\feature^j_{k-1}\Vert_\featureSpace^2
\label{eq:lowerSemicontinuityMatching}
\end{equation}
for every $k=1,\dots,K$.
To this end, we first show $\feature_k^j\circ\deformation_k\rightharpoonup\feature_k\circ\deformation_k$ in~$\featureSpace$.
For every $g \in \featureSpace$ the transformation formula yields
\begin{equation*}
\int_{\domain}(\feature^j_k\circ\deformation_k-\feature_k\circ\deformation_k)\cdot g\dx x
=\int_{\domain}(\feature^j_k-\feature_k)\cdot(g(\det (D\deformation_k))^{-1})\circ\deformation_k^{-1}\dx x\,,
\end{equation*}
which converges to $0$ since $(g(\det (D\deformation_k))^{-1})\circ\deformation_k^{-1}\in\featureSpace$ due to~\eqref{eq:uniformDetBoundMinimizer}.
Hence, $\feature_k^j\circ\deformation_k-\feature_{k-1}^j\rightharpoonup\feature_k\circ\deformation_k-\feature_{k-1}$ in~$\featureSpace$,
which readily implies~\eqref{eq:lowerSemicontinuityMatching}.
Hence,
\begin{align*}
\liminf_{j\to\infty}\Pathenergy^{K,D}[(\feature_A,\featureinnervec^j,\feature_B),\defvec]\geq\Pathenergy^{K,D}[(\feature_A,\featureinnervec,\feature_B),\defvec]\,,
\end{align*}
which proves the proposition.
\end{proof}
We can now combine both previous propositions to prove the existence of discrete geodesics for the deep feature space metamorphosis model.
\begin{theorem}[Existence of discrete geodesics]\label{thm:metaExistenceGeodesics}
Let the assumptions \ref{W1}--\ref{W2} and \ref{a1}--\ref{a2} be satisfied, $K\geq 2$ and $\feature_A\in\featureSpace$.
Then, there exists a constant $C_{\Pathenergy}>0$, which is independent of $K$, such that for every
\begin{equation}\label{eq:proximityFeatures}
\feature_B\in\left\{g\in\featureSpace:\Vert g-\feature_A\Vert_\featureSpace<C_{\Pathenergy}\sqrt{K}\right\}
\end{equation}
there exists $\featureinnervec\in\featureSpace^{K-1}$ such that
\begin{equation*}
\Pathenergy^K[(\feature_A,\featureinnervec,\feature_B)]=\inf_{\featureinnervectest\in\featureSpace^{K-1}}\Pathenergy^K[(\feature_A,\featureinnervectest,\feature_B)]
\end{equation*}
and the associated vector of minimizing deformations consists of $C^1(\domain,\domain)$-diffeo\-morphisms.
\end{theorem}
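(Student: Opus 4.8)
The plan is to combine the two preceding propositions via an alternating-minimization argument that produces a minimizing sequence of paths for $\Pathenergy^K$ while simultaneously keeping the minimizing deformations under uniform control so that Proposition~\ref{prop:wellPosednessEnergy} can be invoked at every stage. First I would fix $\feature_A$ and choose the proximity constant $C_{\Pathenergy}$ so that, whenever $\feature_B$ satisfies~\eqref{eq:proximityFeatures}, the straight-line interpolation $\feature_k^0 = \tfrac{K-k}{K}\feature_A + \tfrac{k}{K}\feature_B$ furnishes a comparison path whose energy is bounded and whose consecutive features are close in the sense of~\eqref{eq:closenessFeature}. Concretely, along such a path each pairwise distance $\Vert\feature_{k-1}-\feature_k\Vert_\featureSpace = \tfrac1K\Vert\feature_A-\feature_B\Vert_\featureSpace$ is of order $C_{\Pathenergy}/\sqrt{K}$, which for $C_{\Pathenergy}$ small enough relative to the $C_\energy$ of Proposition~\ref{prop:wellPosednessEnergy} ensures local well-posedness of each pairwise energy, and Lemma~\ref{lemm:growthControl} then bounds $\Vert\deformation_k-\Id\Vert_{H^m(\domain)}$ uniformly by $\theta(\cdot)$ evaluated at the (finite) comparison energy.

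Next I would take a minimizing sequence of paths $\featurevec^j$ for $\Pathenergy^K$ with the fixed endpoints. By Proposition~\ref{prop:wellPosednessEnergy} each pairwise energy $\energy[\feature^j_{k-1},\feature^j_k]$ is realized by a minimizing deformation $\deformation_k^j$ that is a $C^1$-diffeomorphism, and by Lemma~\ref{lemm:growthControl} together with the energy bound inherited from the comparison path, the family $\{\deformation_k^j-\Id\}$ is uniformly bounded in $H^m(\domain)$, hence (via the embedding into $C^{1,\alpha}$ and the Lipschitz continuity of the determinant, exactly as in Proposition~\ref{prop:wellPosednessEnergy}) the Jacobian determinants satisfy a \emph{uniform} lower bound $\det(D\deformation_k^j)\geq c_{\det}>0$ independent of $j$ and $k$. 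This uniform determinant bound is precisely the hypothesis~\eqref{eq:uniformDetBoundMinimizer} required by Proposition~\ref{prop:existenceImageInnerVec}. The uniform boundedness of the inner features $\featureinnervec^j$ in $\featureSpace^{K-1}$ then follows from the induction estimate~\eqref{eq:uniformBoundFeaturesInduction}, so after passing to a subsequence $\featureinnervec^j\rightharpoonup\featureinnervec$ weakly in $\featureSpace^{K-1}$ and $\deformation_k^j\rightharpoonup\deformation_k$ weakly in $H^m(\domain)$, with strong $C^{1,\alpha}$ convergence of the deformations.

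I would then establish lower semicontinuity of $\Pathenergy^K$ along this sequence by reusing the two convergence mechanisms already isolated in the excerpt: assumption~\ref{a2} gives $\anisotropy[\proj[\feature_k^j]]\to\anisotropy[\proj[\feature_k]]$ in $L^\infty$, which combined with the strong convergence $\deformation_k^j\to\deformation_k$ in $C^{1,\alpha}$ and the lower semicontinuity of the seminorm handles the regularization terms, while the matching term is treated exactly as in~\eqref{eq:lowerSemicontinuityMatching}, using the weak convergence $\feature_k^j\circ\deformation_k^j - \feature_{k-1}^j \rightharpoonup \feature_k\circ\deformation_k - \feature_{k-1}$ in $\featureSpace$ and weak lower semicontinuity of the norm. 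Since each $\energy[\feature^j_{k-1},\feature^j_k]\leq\energy^D[\anisotropy[\proj[\feature^j_k]],\feature^j_{k-1},\feature^j_k,\deformation_k^j]$, passing to the $\liminf$ yields $\Pathenergy^K[(\feature_A,\featureinnervec,\feature_B)]\leq\liminf_j\Pathenergy^K[\featurevec^j]$, so the limit path is a minimizer; the associated minimizing deformations are the $\deformation_k$, which are $C^1$-diffeomorphisms by the preserved determinant bound and~\cite[Theorem 5.5-2]{Ci88}.

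The main obstacle is the \emph{uniformity in $K$} of the constant $C_{\Pathenergy}$ and the simultaneous uniform determinant bound $c_{\det}$ across all $K$ segments. The subtlety is that Proposition~\ref{prop:wellPosednessEnergy} gives a per-segment smallness threshold $C_\energy$, but the theorem claims a single $C_{\Pathenergy}$ independent of $K$ that controls a budget spread over $K$ segments each of size $O(1/\sqrt{K})$; I would resolve this by observing that the \emph{pairwise} energy along the comparison path scales like $K\cdot\energy[\feature_{k-1},\feature_k]$ with $\energy$ of order $\tfrac1\delta\Vert\feature_{k-1}-\feature_k\Vert^2 = O(C_{\Pathenergy}^2/K)$, so the total energy stays $O(C_{\Pathenergy}^2)$ uniformly in $K$, and correspondingly each individual segment's energy is small enough that $\theta$ keeps $\Vert\deformation_k-\Id\Vert_{C^1}$ below the threshold guaranteeing $\det(D\deformation_k)\geq c_{\det}$ with $c_{\det}$ independent of $K$. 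Making this budget argument fully rigorous — closing the loop that the minimizing path's segments inherit the same smallness as the comparison path, rather than merely its total energy — is the delicate point and would require an a~priori per-segment energy bound, which follows because each segment energy is dominated by the total path energy, itself bounded by the comparison value.
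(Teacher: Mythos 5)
Your overall architecture coincides with the paper's: the convex-combination comparison path with identity deformations gives the total bound $\delta^{-1}\Vert\feature_B-\feature_A\Vert_{\featureSpace}^2<\delta^{-1}C_{\Pathenergy}^2K$, hence a per-segment energy bound $\leq\delta^{-1}C_{\Pathenergy}^2$ independent of $K$ (since each term $K\,\energy^D[\cdot]$ is dominated by the total), Lemma~\ref{lemm:growthControl} converts this into a $K$-uniform $C^1$-bound and determinant bound $c_{\det}$, and compactness, \ref{a2}, and weak lower semicontinuity close the argument exactly as in the paper. However, there is one genuine gap: you invoke Proposition~\ref{prop:wellPosednessEnergy} to attach, to each consecutive pair $(\feature^j_{k-1},\feature^j_k)$ of an arbitrary minimizing sequence of feature paths, a per-pair minimizing deformation $\deformation^j_k$. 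The hypothesis of that proposition is the feature-proximity condition~\eqref{eq:closenessFeature}, $\Vert\feature^j_{k-1}-\feature^j_k\Vert_{\featureSpace}<C_\energy$, and your budget argument delivers only a bound on the infimal segment energy, which does \emph{not} imply feature proximity: for a near-optimal $\deformation$ one controls $\Vert\feature^j_k\circ\deformation-\feature^j_{k-1}\Vert_{\featureSpace}$ and, via Lemma~\ref{lemm:growthControl}, $\Vert\deformation-\Id\Vert_{H^m(\domain)}$, but $\Vert\feature^j_k\circ\deformation-\feature^j_k\Vert_{\featureSpace}$ is not uniformly small over merely $L^2$-bounded features, so \eqref{eq:closenessFeature} can fail along the sequence even though every segment energy is tiny. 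So the "delicate point" you flag is real, but you misdiagnose it: the per-segment energy bound is indeed inherited from the total, yet it does not verify the hypothesis of Proposition~\ref{prop:wellPosednessEnergy} as stated. The repair is straightforward: the proof of Proposition~\ref{prop:wellPosednessEnergy} uses \eqref{eq:closenessFeature} only to produce the upper energy bound $\overline{\mathbf{W}}<C_\energy^2/\delta$ by testing with $\Id$, so you may rerun that direct method with your energy budget in place of the proximity condition. The paper sidesteps the issue entirely by taking a minimizing sequence of \emph{pairs} $(\featurevec^j,\defvec^j)$ for the joint functional $\Pathenergy^{K,D}$ --- so per-pair attainment is never needed --- and then uses Proposition~\ref{prop:existenceImageInnerVec} to replace $\featurevec^j$ by the energy-minimizing features for fixed $\defvec^j$.

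A second, minor gloss: the matching-term lower semicontinuity with both $\feature^j_k$ and $\deformation^j_k$ varying is not literally~\eqref{eq:lowerSemicontinuityMatching}, which is proved for a \emph{fixed} vector of deformations. You do assert the correct weak convergence $\feature^j_k\circ\deformation^j_k-\feature^j_{k-1}\rightharpoonup\feature_k\circ\deformation_k-\feature_{k-1}$, but establishing it requires the additional splitting the paper uses for~\eqref{eq:lowerSemicontinuityJointMatching}: decompose $\feature^j_k\circ\deformation^j_k-\feature_k\circ\deformation_k$ into $(\feature^j_k-\feature_k)\circ\deformation^j_k$, treated by the transformation formula together with the $L^\infty$-convergence of $(\det(D\deformation^j_k))^{-1}\circ(\deformation^j_k)^{-1}$, and $\feature_k\circ\deformation^j_k-\feature_k\circ\deformation_k$, treated by smooth approximation as in the proof of~\eqref{eq:mismatchConvergence}. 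Your earlier steps (strong $C^{1,\alpha}$-convergence of the deformations and the uniform determinant bound~\eqref{eq:uniformDetBoundMinimizer}) supply precisely the ingredients for this, so once the Proposition~\ref{prop:wellPosednessEnergy} issue above is repaired, your proof goes through along essentially the paper's lines.
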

\begin{proof}
For a fixed $\feature_A\in\featureSpace$ let $\feature_B$ satisfy \eqref{eq:proximityFeatures} for a constant $C_{\Pathenergy}$ specified below.
For $k=0,\ldots,K$ let $\overline\feature_k=\frac{k}{K}\feature_B+(1-\frac{k}{K})\feature_A\in\featureSpace$ be a convex combination of the input features.
We first note that 
\begin{align*}
\overline{\Pathenergy^K}\coloneqq&\Pathenergy^{K,D}[(\overline\feature_0,\overline\feature_1,\ldots,\overline\feature_K),(\Id,\ldots,\Id)]\\
=&\frac{K}{\delta}\sum_{k=1}^K \Vert\feature_{k}-\feature_{k-1}\Vert^2_{\featureSpace}=\frac{1}{\delta}\Vert\feature_B-\feature_A\Vert_{\featureSpace}^2<\frac{C_{\Pathenergy}^2 K}{\delta}
\end{align*}
is a finite upper bound for the energy.
Consider the minimizing sequence
\[
(\featurevec^j,\defvec^j)=
((\feature_0^j,\ldots,\feature_K^j),(\deformation_1^j,\ldots,\deformation_K^j))\in\featureSpace^{K+1}\times\admset^K
\]
for $j\in\N$ with $\feature_0^j=\feature_A$ and $\feature_K^j=\feature_B$
associated with the variational problem
$(\featurevec,\defvec)\mapsto\Pathenergy^{K,D}[\featurevec,\defvec]$,
which has the finite upper bound $\overline{\Pathenergy^K}$.
Following the same line of arguments as in Proposition~\ref{prop:wellPosednessEnergy} we obtain the boundedness of $\defvec^j$ in~$H^m(\domain,\domain)$, 
which results in a weakly convergent subsequence (not relabeled) $\defvec^j\rightharpoonup\defvec$ in $H^m(\domain,\domain)$.
Due to $H^m(\domain,\domain)\hookrightarrow C^1(\overline\domain,\overline\domain)$ one obtains $\defvec^j\to\defvec$ in~$C^1(\overline\domain,\overline\domain)$ for a further subsequence (not relabeled).
By taking into account Lemma~\ref{lemm:growthControl} we get
\begin{equation*}
\Vert\deformation^j_k-\Id\Vert_{C^1(\overline\domain)}\leq C\Vert\deformation^j_k-\Id\Vert_{H^m(\domain)}
\leq C\theta(K^{-1}\overline{\Pathenergy^K})\leq C\theta(\delta^{-1}C_{\Pathenergy}^2)
\end{equation*}
for every $j\in\N$ and every $k=1,\dots,K$.
By adapting~$C_{\Pathenergy}$ if necessary we can assume
\[
\inf_{j\in\N}\min_{k=1,\ldots,K}\min_{x\in\overline\domain} \;\det(D\deformation^j_k(x))>c_{\det}
\]
for a constant $c_{\det}>0$.
Taking into account \cite[Theorem 5.5-2]{Ci88} we can conclude that $\defvec^j$ and $\defvec$ are $C^1(\domain,\domain)$-diffeomorphisms.
Using Proposition~\ref{prop:existenceImageInnerVec} we can replace~$\featurevec^j$ by the energy minimizing feature vector associated with $\defvec^j$, which possibly reduces the path energy. 
The features $\featurevec^j$ are uniformly bounded in~$\featureSpace^{K+1}$, which follows from an analogous reasoning as~\eqref{eq:uniformBoundFeaturesInduction}.
Thus, $\featurevec^j\rightharpoonup\featurevec$ holds true for a subsequence (not relabeled) in $\featureSpace^{K+1}$, which
implies $\anisotropy[\proj[\feature^j_k]] \to \anisotropy[\proj[\feature_k]]$ in $L^\infty(\domain)$ due to~\ref{a2}. Consequently, for every $k=1,\dots,K$ we obtain
\begin{equation*}
\liminf_{j\to\infty}\int_{\domain}\!{\anisotropy[\proj[\feature^j_k]]\energyDensity(D\deformation^j_k)}\dx x
\geq \!\!\int_{\domain}\!{\anisotropy[\proj[\feature_k]]\energyDensity(D\deformation_k )}\dx x\,.
\end{equation*}

Finally, we verify the lower semicontinuity estimate 
\begin{equation}
\Vert\feature_k\circ\deformation_k-\feature_{k-1}\Vert_{\featureSpace}^2\leq\liminf_{j\to\infty}\Vert\feature^j_k\circ\deformation^j_k-\feature^j_{k-1}\Vert_{\featureSpace}^2
\label{eq:lowerSemicontinuityJointMatching}
\end{equation}
for every $k=1,\dots,K$. 
To this end, we take into account the decomposition  
\[
\feature^j_k\circ\deformation^j_k-\feature_k\circ\deformation_k = (\feature^j_k\circ\deformation^j_k-\feature_k\circ\deformation^j_k)+(\feature_k\circ\deformation^j_k-\feature_k\circ\deformation_k)\,.
\]
The second term is estimated as in the proof of \eqref{eq:mismatchConvergence}.
Thus it remains to consider the convergence properties of the first term.
For a test function $g\in\featureSpace$ we obtain using the transformation rule 
\begin{equation*}
\int_\domain(\feature^j_k\circ\deformation^j_k-\feature_k\circ\deformation^j_k)\cdot g\dx x
=\int_\domain(\feature^j_k-\feature_k)\cdot(g(\det(D\deformation^j_k))^{-1})\circ(\deformation^j_k)^{-1}\dx x\,.
\end{equation*}
The right hand side converges to~$0$ due to the convergence $(\det(D\deformation^j_k))^{-1}\circ(\deformation^j_k)^{-1}\to \det(D\deformation_k))^{-1}\circ\deformation_k^{-1}$ in $L^\infty(\Omega)$ and  $\feature_k^j \rightharpoonup \feature_k$ in $\featureSpace$ for $j\to \infty$.
Thus, $\feature_k^j\circ\deformation_k^j\rightharpoonup \feature_k\circ\deformation_k$ for $j\to \infty$, which together with the lower semicontinuity of the $L^2$-norm proves~\eqref{eq:lowerSemicontinuityJointMatching}. 
Altogether, we observe that 
$\Pathenergy^K[\featurevec]\leq\Pathenergy^{K,D}[\featurevec,\defvec]\leq\liminf\limits_{j\to\infty}\Pathenergy^{K,D}[\featurevec^j,\defvec^j]\,.$
\end{proof}

\section{Convergence of discrete geodesic paths}\label{sec:Mosco}
In this section, we provide a precise statement of the Mosco--convergence for $K\to\infty$ of a suitable temporal extension of the time discrete path energy~$\Pathenergy^K$
in the deep metamorphosis model to the time continuous path energy~$\pathenergy$ introduced in Definition~\ref{contPathEnergy}.
Furthermore, the convergence of time discrete geodesics to time continuous geodesic paths is established, which 
in particular implies the existence of time continuous geodesics in the deep feature metamorphosis model with an anisotropic regularizer.

We recall the definition of Mosco--convergence~\cite{Mosco69}, which can be seen as a modification of $\Gamma$--convergence. 
For further details we refer the reader to~\cite{dalMaso93}.
\begin{definition}\label{MoscoDefinition}
Let $X$ be a Banach space.
Consider functionals $\{\pathenergy^K\}_{K \in \N}$ and $\pathenergy$ from $X$ to $\overline{\R}$ that satisfy
\begin{itemize}[leftmargin=4.2ex,itemsep=.2\baselineskip,parsep=.1\baselineskip]
\item[(i)]
for every sequence $\{x^K\}_{K\in\N}\subset X$ with $x^K\rightharpoonup x \in X$ the estimate
\[\liminf_{K\to\infty}\pathenergy^K[x^K]\geq\pathenergy[x]\]
holds true ("$\liminf$--inequality"),
\item[(ii)]
for every $x \in X$ there exists a recovery sequence $\{x^K\}_{K\in\N}\subset X$ satisfying $x^K\rightarrow x$ in~$X$ such that the estimate
\[\limsup_{K\to\infty}\pathenergy^K[x^K]\leq\pathenergy[x]\] is valid.
\end{itemize}
Then $\{\pathenergy^K\}_{K \in \N}$ converges to~$\pathenergy$ in the sense of Mosco.
\end{definition}

In what follows, we define temporal extensions of all relevant quantities required for the statement of the Mosco--convergence.
We remark that this construction is similar to \cite{BeEf14,EfNe19}, where further details can be found.

To ensure that the involved deformations are diffeomorphisms and to
avoid the interpenetration of matter along the morphing sequence, we replace in the definition of 
$\admset$ the  positivity constraint for the determinant by  the stronger condition
$\det(D\deformation)\geq\varepsilon$ for a fixed (small)~$\varepsilon>0$ as already proposed in~\cite{EfNe19}.
Note that all existence results from the previous section remain valid for this modified definition.

For $K\in\N$, let $\defvec^K=(\deformation_1^K,\ldots,\deformation_K^K)\in\admset^K$ be the vector of deformations
associated with a vector of features $\featurevec^K=(\feature_0^K,\ldots,\feature_K^K)\in \featureSpace^{K+1}$. In particular, if $\defvec^K$ is optimal then $\Pathenergy^K[\featurevec^K]=\Pathenergy^{K,D}[\featurevec^K,\defvec^K]$.
For $k=1,\ldots,K$, we define for $t_k^K\coloneqq\frac{k}{K}$, $t\in[t_{k-1}^K,t_k^K]$, and $x\in\domain$
the \emph{discrete transport map} 
\begin{equation}
y_k^K(t,x)=x+(t-t^K_{k-1})K(\deformation_k^K(x)-x)\,.
\label{eq:transportMap}
\end{equation}
Note that $y_k^K(t^K_{k-1},x)=x$ and $y_k^K(t_k^K,x)=\deformation_k^K(x)$.
Following \cite[Chapter~5]{Ci88}, the condition 
\[
\max_{k=1,\ldots,K}\Vert D\deformation_k^K-\Id\Vert_{C^0(\overline\domain)}<1
\]
implies that $y_k^K(t,\cdot)$ is invertible, which follows for~$K$ large enough from the $\liminf$-part of the proof of Theorem~\ref{thm:gamma} below, and we denote the inverse by~$x_k^K(t,\cdot)$.
In this case, we consider the \emph{feature extension operator} $\interpolfeature^K[\featurevec^K,\defvec^K]\in L^2([0,1]\times\featureSpace)$ for $t\in[t^K_{k-1},t_k^K]$ by
\begin{equation*}
\interpolfeature^K[\featurevec^K,\defvec^K](t,x)
\coloneqq \Big(\feature_{k-1}^K+K(t-t^K_{k-1})(\feature_k^K\circ\deformation_k^K-\feature_{k-1}^K)\Big)(x_k^K(t,x))\,,
\end{equation*}
to define an extension $\pathenergy^K:L^2([0,1]\times\featureSpace)\to[0,\infty]$ 
of the discrete path energy~$\Pathenergy^{K,D}$, where 
\begin{align*}
&\pathenergy^{K}[\feature]= \inf_{\overline{\defvec}^K\in\admset^K}
\left\{\Pathenergy^{K,D}[\featurevec^K,\overline{\defvec}^K]:
\interpolfeature_K[\featurevec^K,\overline{\defvec}^K]=\feature\right\}
\end{align*}
if there exist $\featurevec^K\in\featureSpace^{K+1}$ and $\defvec^K\in\admset^K$
such that $\feature=\interpolfeature^K(\featurevec^K,\defvec^K)$,
else we set $\pathenergy^{K}[\feature]=\infty$.

We can now state the main theorem of this section:
\begin{theorem}[Mosco--convergence of the discrete path energies]\label{thm:gamma}
Let \ref{W1}--\ref{W3} and \ref{a1}--\ref{a3} be satisfied.
Then, the time discrete path energy~$\{\pathenergy^K\}_{K\in\N}$ converges to~$\pathenergy$ in the sense of Mosco in the $L^2([0,1]\times\featureSpace)$-topology for $K\to\infty$.
\end{theorem}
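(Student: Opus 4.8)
The plan is to verify the two conditions of Definition~\ref{MoscoDefinition} separately, treating the $\liminf$--inequality (i) and the construction of a recovery sequence (ii) as the two halves of the argument. For the $\liminf$--inequality I would start with a sequence $\feature^K\rightharpoonup\feature$ in $L^2([0,1]\times\featureSpace)$ with $\sup_K\pathenergy^K[\feature^K]<\infty$ (otherwise there is nothing to prove), pick near-optimal $\featurevec^K\in\featureSpace^{K+1}$ and $\defvec^K\in\admset^K$ realizing $\feature^K=\interpolfeature^K[\featurevec^K,\defvec^K]$, and extract the uniform bounds. Lemma~\ref{lemm:growthControl} together with the energy bound gives $\Vert\deformation_k^K-\Id\Vert_{H^m(\domain)}\leq\theta(K^{-1}\overline{\pathenergy})$, so the discrete velocities $v^K(t,\cdot)\coloneqq K(\deformation_k^K-\Id)$ are bounded in $L^2((0,1),\motionSpace)$; a discrete rate argument for the matching term bounds the associated scalar $z^K$ in $L^2((0,1)\times\domain)$. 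After passing to weakly convergent subsequences $v^K\rightharpoonup v$, $z^K\rightharpoonup z$, the central tasks are: show the limit $(v,z)$ lies in $\mathcal{C}(\feature)$ (i.e.~satisfies the variational inequality~\eqref{eq:zineq}), and show lower semicontinuity of $\int_0^1\int_\domain L[\anisotropy[\proj[\feature]],v,v]+\frac1\delta z^2\dx x\dx t$. The quadratic form is convex in $(\varepsilon[v],D^m v,z)$, so weak lower semicontinuity follows once the anisotropy coefficients converge strongly; here I would invoke \ref{a2}, using that $\feature^K\to\feature$ strongly enough (Remark~\ref{rem:equi}-type continuity of $t\mapsto\anisotropy[\proj[\feature(t,\cdot)]]$) to get $\anisotropy[\proj[\feature^K]]\to\anisotropy[\proj[\feature]]$ in $L^\infty$.

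The hard part of (i) will be passing the relaxed material-derivative inequality~\eqref{eq:zineq} to the limit. The discrete transport map $y_k^K$ from~\eqref{eq:transportMap} interpolates linearly between $\Id$ and $\deformation_k^K$, whereas the continuous flow $\psi_t$ solves $\dot\psi_t=v(t,\psi_t)$; I would need to show that the piecewise-linear flows induced by $y_k^K$ converge in $C^0([0,1],C^1(\overline\domain))$ to the flow $\psi$ of the limit velocity $v$, which is a Cauchy-sequence-in-the-flow estimate controlled by the $H^m$-bounds and the Gronwall inequality. Once the discrete and continuous characteristics are identified, the discrete finite-difference version of the matching inequality built into $\energy^D$ transfers, in the limit, to the integral inequality~\eqref{eq:zineq} along $\psi_t$; the inequality (rather than equality) formulation of $\mathcal{C}(\feature)$ is exactly what makes this limit passage stable under weak convergence, as the authors emphasize after~\eqref{eq:zineq}. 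This flow-convergence and the corresponding transfer of the constraint is where I expect the main technical obstacle to lie.

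For the recovery sequence (ii) I would argue in the reverse direction: given $\feature$ with $\pathenergy[\feature]<\infty$, use Remark~\ref{rem:equi} to obtain the regularity $t\mapsto\feature(t,\psi(t,\cdot))\in H^1((0,1),\featureSpace)$ and an optimal (or near-optimal) pair $(v,z)\in\mathcal{C}(\feature)$. I would then define $\deformation_k^K$ by integrating $v$ over $[t_{k-1}^K,t_k^K]$ along the flow (so that $\deformation_k^K\approx\psi_{t_k^K}\circ\psi_{t_{k-1}^K}^{-1}$) and set $\feature_k^K\coloneqq\feature(t_k^K,\cdot)$, so that $\feature^K\coloneqq\interpolfeature^K[\featurevec^K,\defvec^K]\to\feature$ strongly in $L^2([0,1]\times\featureSpace)$. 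The $\limsup$ estimate then reduces to a consistency computation: by~\ref{W3} the second-order Taylor expansion of $\energyDensity$ at $\Id$ reproduces the quadratic form $\frac\lambda2(\tr\varepsilon[v])^2+\mu\tr(\varepsilon[v]^2)$, so that $K\,\energyDensity(D\deformation_k^K)=K\,\energyDensity(\Id+K^{-1}Dv+o(K^{-1}))\to\int L[\anisotropy,v,v]$ after summation, while the matching term $\frac{K}{\delta}\Vert\feature_k^K\circ\deformation_k^K-\feature_{k-1}^K\Vert^2$ converges to $\frac1\delta\int z^2$ by the Lagrangian identity~\eqref{eq:Lagrange}. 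Assumption~\ref{a3} (Lipschitz continuity of the anisotropy) guarantees that replacing $\anisotropy[\proj[\feature_k^K]]$ by $\anisotropy[\proj[\feature(t,\cdot)]]$ produces only an $o(1)$ error in the sum. Collecting the $\liminf$- and $\limsup$-inequalities yields Mosco--convergence, and combining with the compactness from Lemma~\ref{lemm:growthControl} gives convergence of the discrete geodesics and existence of a time-continuous minimizer as the standard corollary.
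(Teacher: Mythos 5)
Your plan matches the paper's proof essentially point for point: the $\liminf$ half via the uniform $H^m$-bounds from Lemma~\ref{lemm:growthControl}, weak limits of the piecewise-constant discrete velocities and scalar material-derivative surrogates, convergence of the discrete flows to transfer the variational inequality \eqref{eq:zineq} to the limit pair $(v,z)\in\mathcal{C}(\feature)$, strong $L^\infty$-convergence of the anisotropy via \ref{a2}--\ref{a3} (the paper makes this precise through pointwise-in-time weak convergence $\feature^K(t)\rightharpoonup\feature(t)$ plus time-equicontinuity), and the recovery half by sampling $\feature$ and the flow $\psi_{t_{k-1}^K,t_k^K}$ at grid times with a Taylor expansion of $\energyDensity$ based on \ref{W3} and Jensen's inequality. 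Two small corrections only: the matching term in the $\limsup$ is closed by Jensen's inequality applied to the inequality \eqref{eq:zineq} rather than the Lagrangian identity \eqref{eq:Lagrange}, which is not available in the relaxed model (and only $\limsup\leq$ is needed), and the same \ref{W3}-Taylor expansion relating $\energyDensity$ to $L$ (with accumulated remainder of order $K^{-1/2}$) is also needed in the $\liminf$ half, since the discrete energies involve $\energyDensity(D\deformation_k^K)$ and not the quadratic form directly.
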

\begin{proof}
The proof follows the structure of the Mosco--convergence proof ~\cite[Theorem~5.2 \& Theorem~5.4]{EfNe19} 
with adaptations required due to the incorporation of the anisotropy. Furthermore, in our case images are not pointwise maps into a general Hadamard manifold but rather maps into some Euclidean space.
To keep the exposition compact, we focus here on these adaptations.
To facilitate reading, we give an overview of the general structure of the proof, which retrieves the overview of the proof structure in~\cite{EfNe19}.
Many of the technical arguments already appeared in the proof of existence of discrete geodesics in Section~\ref{sec:timeDiscrete} and were given there in full detail.
Thus, we keep these arguments brief here. 

\medskip

\noindent
(i) {\bf $\liminf$--inequality.}
\smallskip

\noindent
-- The \emph{identification of the image (feature) and deformation vectors} is unaltered compared to the proof in~\cite{EfNe19}.
Indeed, one obtains that the sequence $\feature^K$ of feature maps with uniformly bounded energy converges weakly to a feature map~$\feature\in L^2([0,1],\featureSpace)$ with finite energy.
In fact, $\featurevec^K$ and the optimal~$\overline{\defvec}^K$ can be retrieved from~$\feature^K$, where the existence of~$\overline{\defvec}^K$ 
follows as in~\cite[Lemma~5.1]{EfNe19}.

\medskip

\noindent
-- The verification of the \emph{lower semi-continuity of the weak material derivative} in the sense
\[
\Vert z\Vert_{L^2([0,1]\times\domain)}^2
\leq\liminf_{K\to\infty}K\sum_{k=1}^K\Vert\feature_{k-1}^K-\feature_k^K\circ\overline{\deformation}_k^K\Vert_{L^2(\domain)}^2\,.
\]
for $z$ being the weak limit of $z^K$ in $L^2((0,1) \times \domain)$ with
\[
z^K\big\vert_{[t_{k-1}^K,t_k^K)}\coloneqq K|\feature_{k-1}^K(x_k^K)-\feature_k^K\circ\overline{\deformation}_k^K(x_k^K)|
\]
is identical to the corresponding reasoning in~\cite{EfNe19}.
At this point we also observe that the velocity field~$w^K$ with $w^K\coloneqq w_k^K\coloneqq K(\overline{\deformation}_k^K-\Id)$ on $[t_{k-1}^K,t_k^K)$ is uniformly bounded in $L^2((0,1),\motionSpace)$ and thus
converges weakly in $L^2((0,1),\motionSpace)$ to some limit velocity field~$v$.
This fact is again proved following the corresponding reasoning as in \cite{EfNe19}.

\medskip

\noindent
-- Also the \emph{verification of the admissibility of the limit}, i.e.~$(v,z)\in \mathcal{C}(\feature)$,
stays unaltered compared to~\cite{EfNe19}. 
To this end, one shows that the discrete flow~$\psi^K$ associated with the motion field $v^K(t,x)\coloneqq K(\overline{\deformation}_k^K-\Id)(x_k^K(t,x))$ for $t\in[t_{k-1}^K,t_k^K)$ uniformly converges
in $C^{0,\alpha}([0,1],C^{1,\alpha}(\overline\domain))$ for a suitable constant~$\alpha>0$ to the continuous flow induced by the velocity~$v$.
Moreover, the variational inequality for the material derivative holds true in the limit.

\medskip

\noindent
-- In the final step, the \emph{lower semi-continuity of the viscous dissipation} has to be shown, i.e.
\begin{equation*}
\int_0^1\int_\domain L[\anisotropy[\proj[\feature]],v,v]\dx x\dx t
\leq \liminf_{K\to\infty}K\sum_{k=1}^K\int_\domain\anisotropy[\proj[\feature_k^K]]\energyDensity(D\overline\deformation_k^K)+\gamma\vert D^m\overline\deformation_k^K\vert^2\dx x\,.
\end{equation*}
Therefore, we define $\anisotropy^K,\overline{\anisotropy}^K\in L^\infty((0,1)\times\domain,\R^+)$ via
$\overline{\anisotropy}^K\big\vert_{[t_{k-1}^K,t_k^K)}\coloneqq\anisotropy[\proj[\feature_k^K]]$ and $\anisotropy^K\coloneqq\anisotropy[\proj[\feature^K]]$.
We have to show in addition to~\cite{EfNe19}   
that $\overline{\anisotropy}^K$ converges strongly to 
$a\coloneqq\anisotropy[\proj[\feature]]$ in $L^\infty((0,1)\times\domain)$.
To this end, we first use the uniform boundedness of $\feature^K$ in $L^\infty([0,1],\featureSpace)$, 
an approximation argument and~\ref{a3} to show the convergence $\overline{\anisotropy}^K-\anisotropy^K\to 0$ in $L^\infty((0,1)\times\domain)$ for $K\to\infty$.
It remains to verify $\anisotropy^K\to\anisotropy$ in $L^\infty((0,1)\times\domain)$ for $K\to\infty$.
The variational inequality
\[
\vert\feature^K(t,\psi_t^K(x))-\feature^K(s,\psi_s^K(x))\vert\leq\int_s^t z^K(r,\psi_r^K(x))\dx r
\]
implies $\feature^K(t)\rightharpoonup\feature(t)$ in $\featureSpace$ for every $t\in[0,1]$ using similar arguments as in step (iv) of~\cite[Theorem~4.1]{BeEf14},
which leads to $\anisotropy^K(t,\cdot)\to\anisotropy(t,\cdot)$ in~$L^\infty(\domain)$ using~\ref{a2}.
We are left to show
\[
\Vert \anisotropy^K(t+\tau,\cdot)-\anisotropy^K(t,\cdot)\Vert_{\featureSpace}\to 0
\]
uniformly in~$K$ and $t$ as $\tau\to 0$, which follows by~\ref{a3} from 
$\Vert\feature^K(t+\tau,\cdot)-\feature^K(t,\cdot)\Vert_{\featureSpace}\to 0$.
This equicontinuity in time is a consequence of the variational inequality,
the uniform boundedness of $z^K$ in $L^2((0,1)\times \domain)$, the uniform boundedness of $\psi^K$ and $(\psi^K)^{-1}$ in $C^{0,\frac12}([0,1],C^{1,\alpha}(\overline\domain))$ for suitable $\alpha>0$,
and an approximation argument for~$\feature^K$.
Then, the actual lower semicontinuity is verified using 
a Taylor expansion of~$\energyDensity$ based on~\ref{W3} to relate the energy density~$W$ with~$L$, where the accumulated remainder is of order~$K^{-\frac{1}{2}}$.

\medskip

\noindent
(ii) \textbf{Recovery sequence.}
Before constructing the recovery sequence, we note that the infimum in~\eqref{eq:pathEnergyDeep} 
is actually attained with an associated pair $(v,z)\in\mathcal{C}(\feature)$, 
which follows from~\cite[Proposition~5.3]{EfNe19} together with Remark~\ref{rem:equi}.

\medskip

\noindent
-- To \emph{construct the recovery sequence} one considers the above pair
$(v,z)\in\mathcal{C}(\feature)$ with an associated flow~$\psi$ and 
defines $\deformation_k^K(x)\coloneqq\psi_{t_{k-1}^K,t_k^K}(x)$, $\feature_k^K(x)\coloneqq\feature(t_k^K,x)$, and $\anisotropy_k^K=\anisotropy[\proj[f_k^K]]$ for $k=1,\ldots,K$, where $\psi_{a,b}(\cdot)=\psi(b,\psi^{-1}(a,\cdot))$
for $a,b\in[0,1]$.

\medskip

\noindent
-- Next, the \emph{identification of the recovery sequence limit} is done, i.e.~one  can show that the extension $\feature^K\coloneqq\interpolfeature^K[\featurevec^K,\defvec^K]$ 
of the time discrete feature vectors $\featurevec^K=(\feature_0,\dots,\feature_K)$ converges to~$\feature$ in $L^2([0,1],\featureSpace)$.
To this end, the discrete flow~$\psi^K$ associated with the time discrete family of deformations~$\defvec^K$ is defined in the same way as in the proof of the $\liminf$--inequality.
Following~\cite{EfNe19}, the convergence $\feature^K\to\feature$ is implied by the variational inequality and the convergence of~$\psi^K$
to the time continuous flow~$\psi$ associated with~$v$ in $C^{0,\alpha}([0,1],C^{1,\alpha}(\overline\domain))$ for a suitable~$\alpha>0$.

\medskip

\noindent
-- Furthermore, we have to \emph{verify the $\limsup$-inequality.} The leading order term of a Taylor expansion of the $k$-th component of the discrete path energy 
\[
\int_\domain\anisotropy_k^K\energyDensity(D\phi_k^K)+\gamma|D^m\phi_k^K|^2\dx x
\]
is given by $K^{-2}\int_\domain L[\anisotropy_k^K,w_k^K,w_k^K]\dx x$, where $w_k^K\coloneqq K(\phi_k^K-\Id)$.
The remainder is of higher order following the argumentation in~\cite{EfNe19}.
Using Jensen's inequality and $w_k^K=\dashint_{t_{k-1}^K}^{t_{k}^K}v(t,\psi_k^K(t,x)) \dx t$ with $\psi_k^K(t,x)\coloneqq\psi_{t_{k-1}^K,t}(x)$ 
we obtain
\begin{equation*}
\int_\domain L[\anisotropy_k^K,w_k^K,w_k^K]\dx x
\leq \int_\domain\dashint_{t_{k-1}^K}^{t_{k}^K}\!\!L[\overline{\anisotropy}^K(t,x),v(t,\psi_k^K(t,x)),v(t,\psi_k^K(t,x))]\dx t\dx x
\end{equation*}
for $\overline{\anisotropy}^K$ defined as before.
Following~\cite{EfNe19}, we can replace $\psi_k^K(t,\cdot)$ by the identity in the limit~$K\to \infty$.
We argue analogously as in the case of the $\liminf$--inequality to show $\overline{\anisotropy}^K\to\anisotropy$ in $L^{\infty}((0,1)\times\domain)$.
Thus, we obtain 
\begin{align*}
&\limsup_{k\to\infty}K\sum_{k=1}^K\int_\domain\anisotropy_k^K\energyDensity(D\deformation_k^K)+\gamma|D^m\deformation_k^K|^2\dx x\\
\leq&\int_0^1\int_\domain L[\anisotropy(t,x),v(t,x),v(t,x)]\dx x\dx t\,.
\end{align*}
Finally, the estimate
\begin{align*}
\limsup_{k\to\infty} K \int_\domain\!\! \vert\feature_{k-1}^K\!-\feature_k^K\circ\deformation_k^K\vert^2\dx x 
\leq \int_0^1\!\!\int_\domain \!\! z^2(t,x)\dx x\dx t
\end{align*}
follows via another application of Jensen's inequality as in~\cite{EfNe19}. 
\end{proof}
This theorem implies the convergence and existence of geodesic paths for the (time continuous) deep feature space metamorphosis model in the following sense:
\begin{theorem}[Convergence of discrete geodesics]
\label{thm:convergence}
Suppose that the assumptions \ref{W1}--\ref{W3} and \ref{a1}--\ref{a3} hold true.
Let $\feature_A,\feature_B\in\featureSpace$  be fixed.
For $K\in\N$ sufficiently large let $\feature^K$ be a minimizer of $\pathenergy^K$ subject to $\feature^K(0)=\feature_A$ and $\feature^K(1)=\feature_B$.
Then, a subsequence of $\{\feature^K\}_{K\in \N}$ weakly converges in $L^2([0,1]\times\featureSpace)$ to a minimizer of the continuous path energy~$\pathenergy$ as $K\to\infty$.
Finally, the associated sequence of discrete path energies converges to the minimal continuous path energy.
\end{theorem}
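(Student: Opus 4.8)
The plan is to derive Theorem~\ref{thm:convergence} as a standard consequence of the Mosco--convergence established in Theorem~\ref{thm:gamma}, combined with a coercivity/compactness estimate that forces the discrete minimizers to lie in a weakly precompact set. The overall architecture is the classical ``fundamental theorem of $\Gamma$-convergence'': Mosco--convergence plus equicoercivity yields convergence of minimizers to a minimizer and convergence of the minimal values. The one genuine subtlety is the presence of the hard Dirichlet-type boundary conditions $\feature(0)=\feature_A$, $\feature(1)=\feature_B$, which must be shown to survive the weak limit, and the construction of a single competitor whose energy is bounded uniformly in $K$ so that the minimal energies $\pathenergy^K[\feature^K]$ are themselves uniformly bounded.

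Concretely, first I would fix an admissible competitor to produce a uniform energy bound. Taking the linear interpolation $\overline\feature_k=\tfrac{k}{K}\feature_B+(1-\tfrac{k}{K})\feature_A$ with deformations all equal to $\Id$, exactly as in the proof of Theorem~\ref{thm:metaExistenceGeodesics}, gives $\pathenergy^K[\feature^K]\le \overline{\Pathenergy^K}=\tfrac1\delta\Vert\feature_B-\feature_A\Vert_{\featureSpace}^2$ for all $K$, a bound \emph{independent} of $K$ (note the extension $\interpolfeature^K$ of this constant-speed discrete path provides the required element of $L^2([0,1]\times\featureSpace)$). With this uniform bound in hand, the equicoercivity machinery from the $\liminf$-part of Theorem~\ref{thm:gamma} applies verbatim: the bound on $\pathenergy^K[\feature^K]$ yields, via Lemma~\ref{lemm:growthControl}, uniform control of the associated deformations $\defvec^K$ in $H^m(\domain,\domain)$, hence uniform bounds on the velocity fields $w^K=K(\overline\deformation_k^K-\Id)$ in $L^2((0,1),\motionSpace)$ and on the material-derivative surrogates $z^K$ in $L^2((0,1)\times\domain)$, and finally uniform boundedness of $\feature^K$ in $L^2([0,1],\featureSpace)$. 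By reflexivity, a subsequence satisfies $\feature^K\rightharpoonup\feature^\ast$ in $L^2([0,1]\times\featureSpace)$.

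Next I would invoke the two halves of Mosco--convergence. The $\liminf$-inequality (Theorem~\ref{thm:gamma}(i)) applied along the weakly convergent subsequence gives
\[
\pathenergy[\feature^\ast]\le\liminf_{K\to\infty}\pathenergy^K[\feature^K]\,.
\]
For the reverse comparison, let $\hat\feature$ be \emph{any} admissible competitor for the continuous problem with $\hat\feature(0)=\feature_A$, $\hat\feature(1)=\feature_B$; the recovery sequence $\{\hat\feature^K\}$ from Theorem~\ref{thm:gamma}(ii) satisfies $\hat\feature^K\to\hat\feature$ strongly and $\limsup_K\pathenergy^K[\hat\feature^K]\le\pathenergy[\hat\feature]$. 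Since the recovery sequence is built from the boundary-matching deformations it is itself admissible for $\pathenergy^K$, so minimality of $\feature^K$ gives $\pathenergy^K[\feature^K]\le\pathenergy^K[\hat\feature^K]$. Chaining the inequalities yields
\[
\pathenergy[\feature^\ast]\le\liminf_{K\to\infty}\pathenergy^K[\feature^K]
\le\limsup_{K\to\infty}\pathenergy^K[\hat\feature^K]\le\pathenergy[\hat\feature]\,,
\]
and since $\hat\feature$ was arbitrary, $\feature^\ast$ is a minimizer of $\pathenergy$. Choosing $\hat\feature=\feature^\ast$ then forces $\pathenergy^K[\feature^K]\to\pathenergy[\feature^\ast]$, establishing convergence of the minimal values.

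\textbf{The main obstacle} I anticipate is the preservation of the endpoint constraints under the weak limit, since weak $L^2([0,1]\times\featureSpace)$ convergence does not a priori control pointwise-in-time traces. This is precisely where the temporal regularity flagged in Remark~\ref{rem:equi} must be exploited: the uniform bounds on $z^K$ and on the discrete flows $\psi^K$, $(\psi^K)^{-1}$ in $C^{0,\frac12}([0,1],C^{1,\alpha}(\overline\domain))$ force equicontinuity in time of the maps $t\mapsto\feature^K(t,\cdot)$ (exactly the argument used to show $\feature^K(t)\rightharpoonup\feature(t)$ for every $t$ in the proof of Theorem~\ref{thm:gamma}), so that the weak limit $\feature^\ast$ inherits a continuous representative satisfying $\feature^\ast(0)=\feature_A$ and $\feature^\ast(1)=\feature_B$. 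I would also verify that admissibility of the recovery sequence for the \emph{constrained} discrete problem is genuine — i.e.\ that $\deformation_k^K=\psi_{t_{k-1}^K,t_k^K}$ still yields boundary features equal to $\feature_A,\feature_B$ — which follows since $\feature_k^K=\feature(t_k^K,\cdot)$ reproduces the endpoints at $k=0,K$. The remaining steps are routine given Theorem~\ref{thm:gamma}.
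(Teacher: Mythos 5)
Your proposal is correct and follows essentially the same route as the paper, which proves this theorem by deferring to the analogous result \cite[Theorem~5.5]{EfNe19} --- precisely the standard ``Mosco--convergence plus equicoercivity implies convergence of minimizers and minimal values'' argument you spell out, including the uniform energy bound via the linearly interpolated competitor and the preservation of the endpoint constraints through the pointwise-in-time weak convergence $\feature^K(t)\rightharpoonup\feature(t)$ established inside the proof of Theorem~\ref{thm:gamma}. Your explicit verification that the recovery sequence respects the discrete boundary conditions is exactly the point the citation-based proof leaves implicit, so no gap remains.
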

\begin{proof}
The proof is analogous  to the proof of~\cite[Theorem~5.5]{EfNe19}.
\end{proof}

\section{Fully discrete model in feature space}\label{sec:fullyDiscrete}
In this section, we present the fully discrete deep feature space metamorphosis model on the image domain~$\domain=[0,1]^2$.
We use bold face letters to differentiate discrete feature maps, images,  and deformations (also considered as vectors) from their continuous counterparts.
For $\text{for }M,N\geq 3$ we define the computational domain and its boundary as follows:
\begin{align*}
\discreteDomain&=\{\tfrac{0}{M-1},\ldots,\tfrac{M-1}{M-1}\}\times\{\tfrac{0}{N-1},\ldots,\tfrac{N-1}{N-1}\}\,,\\
\partial\discreteDomain&=\discreteDomain\backslash\{\tfrac{1}{M-1},\ldots,\tfrac{M-2}{M-1}\}\times\{\tfrac{1}{N-1},\ldots,\tfrac{N-2}{N-1}\}\,.
\end{align*}
We define the discrete $L^p$-norm of a discrete feature map $\discreteFeature$ as
\[
\left\Vert\discreteFeature\right\Vert_{L^p(\discreteDomain)}^p=\frac{1}{MN}\sum_{(i,j)\in\discreteDomain}\Vert\discreteFeature(i,j)\Vert_2^p
\]
and the set of admissible deformations is given by
\begin{equation*}
\discreteAdmset=\Big\{ \discreteDeformation:\discreteDomain\to\discreteDomain:
\discreteDeformation=\Id\text{ on }\partial\discreteDomain,\ \det(\nabla_\MN\discreteDeformation)>0\Big\}\,.
\end{equation*}
Furthermore, the discrete Jacobian operator~$\nabla_\MN$ of $\discreteDeformation$ at $(i,j)\in\discreteDomain$ is defined  
as the forward finite difference operator with Neumann boundary conditions. To
further stabilize the computation, the Jacobian operator
applied to the features is approximated using a Sobel filter.
The discrete image space and the discrete feature space are given by~$\imageSpace_{\MN}=\{\Image:\discreteDomain\to\R^3\}$
and $\featureSpace_{\MN}=\{\discreteFeature:\discreteDomain\to\R^{3+C}\}$, respectively.

A numerically reasonable approximation of the spatial warping operator~$\warp$, which approximates
the pullback of a feature channel~$\discreteFeature\circ\discreteDeformation$ at a point $(k,l)\in\discreteDomain$, is given by
\begin{equation*}
\warp[\discreteFeature,\discreteDeformation](k,l)
=\sum_{(i,j)\in\discreteDomain}\mathbf{s}(\discreteDeformation_{1}(k,l)-i)\mathbf{s}(\discreteDeformation_{2}(k,l)-j)\discreteFeature(i,j)\,,
\end{equation*}
where $\mathbf{s}$ is the third order B-spline interpolation kernel.
Then, the fully discrete mismatch functional~$\DataEnergy_{\MN}$ that approximates
$\int_\domain|\tilde{\feature}\circ\deformation-\feature|^2\dx x$ reads as
\begin{equation*}
\DataEnergy_{\MN}[\discreteFeature,\tilde\discreteFeature,\discreteDeformation]
=\frac{1}{2(3+C)}\sum_{c=1}^{3+C}\left\Vert\warp[\tilde\discreteFeature^c,\discreteDeformation]-\discreteFeature^c\right\Vert_{L^2(\discreteDomain)}^2\,.
\end{equation*}
Likewise, the lower order anisotropic regularization functional $\int_\domain\anisotropy\energyDensity(D\deformation)\dx x$ is discretized as follows:
\[
\RegEnergy_{\MN}[\discreteDeformation,\discreteAnisotropy]=\Vert\discreteAnisotropy\energyDensity(\nabla_\MN\discreteDeformation)\Vert_{L^1(\discreteDomain)}\,.
\]
For simplicity, we neglect the $H^m$-seminorm of the deformations.
In the spatially continuous context of the above convergence proof the compactness induced by the 
$H^m$-seminorm turned out to be indispensable.
In the case of the spatial discretization the grid dependent regularity is ensured by the use of cubic B-splines.

In summary, the fully discrete path energy in the deep metamorphosis model for a 
$(K+1)$-tuple $(\discreteFeature_k)_{k=0}^K$ of discrete feature maps, 
a $K$-tuple $(\discreteDeformation_k)_{k=1}^K$ of discrete deformations,
and a $K$-tuple $(\discreteAnisotropy_k)_{k=1}^K$ of discrete anisotropies
reads as
\begin{equation*}
\Pathenergy_{\MN}^{K}[(\discreteFeature_k)_{k=0}^K,(\discreteDeformation_k)_{k=1}^K,(\discreteAnisotropy_k)_{k=1}^K]
=K\sum_{k=1}^K\RegEnergy_{\MN}[\discreteDeformation_k,\discreteAnisotropy_k]+\frac{1}{\delta}\DataEnergy_{\MN}[\discreteFeature_{k-1},\discreteFeature_k,\discreteDeformation_k]\,.
\end{equation*}
Finally, a discrete geodesic path~$(\discreteFeature_k)_{k=0}^{K}$ in feature space on a specific multiscale level of a feature hierarchy 
is a minimizer of $\Pathenergy_{\MN}^{K}$ subject to given discrete boundary data $\discreteFeature_0=\discreteFeature_A$ and $\discreteFeature_K=\discreteFeature_B$.
Here, $\discreteFeature_A=(\eta\Image_A,\FeatureOperator_\MN(\Image_A))$ and $\discreteFeature_B=(\eta\Image_B,\FeatureOperator_\MN(\Image_B))$, where 
$\FeatureOperator_{\MN}:\imageSpace_{\MN}\to\{\discreteFeature:\discreteDomain\to\R^C\}$ denotes the fully discrete feature extraction operator.

\paragraph{Simple RGB model.}
As a first model, we consider the simple image intensity-based feature space with $C=0$, in which the feature space~$\featureSpace_{\MN}$ coincides with the space of RGB images~$\imageSpace_{\MN}$.
Since a direct computation of the deformations on the full grid is numerically instable, we incorporate a multilevel scheme.
Initially, we start on the coarsest computational domain of size $M_{\mathrm{init}}\times N_{\mathrm{init}}$ with $M_{\mathrm{init}}=2^{-(L-1)}M$ and $N_{\mathrm{init}}=2^{-(L-1)}N$ for a given $L>0$
and compute a time discrete geodesic sequence for suitably resized input images~$\Image_A,\Image_B$.
Then, in subsequent prolongation steps, the width and the height of the computational domain are successively doubled and the initial deformations and images are obtained
via a bilinear interpolation of the preceding coarse scale solutions.

\paragraph{Deep feature space.}
In the second model, image features are extracted using the prominent VGG network with 19~layers as presented in~\cite{SiZi15} to incorporate semantic information in image morphing.
The VGG network is particularly designed for localization and classification of objects in natural images and thus the feature decomposition of images is well-suited for semantic matching.
The building blocks of this network are convolutional layers with subsequent ReLU nonlinear activation functions and max pooling layers.
Here, the max pooling layers canonically yield a multiscale semantic decomposition of images.

For a given grid~$\discreteDomain$, the discrete feature maps of the fixed discrete input images~$\Image_A$ and $\Image_B$ are 
$\FeatureOperator_{\MN}[\Image_A]$ and $\FeatureOperator_{\MN}[\Image_B]$, where
the operator $\FeatureOperator_{\MN}$ is the response of the VGG network up to the layer as shown in Table~\ref{tab:VGG}.
The discrete images~$\Image_A$ and $\Image_B$ are downsampled to match the corresponding grid size~$MN$ via a bilinear interpolation.
In contrast to the simple RGB model, only the deformations are prolongated via a bilinear interpolation in the multilevel approach since successive features on different multilevels are not necessarily related.
To stabilize the optimization, the features on each multilevel are first optimized using the prolongated deformations. 
\begingroup
\renewcommand{\arraystretch}{1.2}
\setlength{\tabcolsep}{8pt}
\begin{table}[htb]
\caption{Multiscale decomposition of the VGG network used for the discrete feature extraction operator~$\FeatureOperator_{\MN}$.}
\begin{center}
\begin{tabular}{c | c | c}
$M\times N$ &layer & $C$\\\hline
$512\times 512$ & $\conv_{1,2}$ & 64  \\
$256\times 256$ & $\conv_{2,2}$ & 128 \\
$128\times 128$ & $\conv_{3,4}$ & 256 \\
$64\times 64$ & $\conv_{4,4}$ & 512 \\
$32\times 32$ & $\conv_{5,4}$ & 512 
\end{tabular}
\label{tab:VGG}
\end{center}
\end{table}
\endgroup

\subsection{Numerical optimization}
In what follows, we present the numerical optimization scheme to compute geodesics for the fully discrete deep feature metamorphosis model.
Here, we use a variant of the inertial proximal alternating linearized minimization algorithm (iPALM, \cite{PoSa16}).
Several numerical experiments indicate that a direct gradient based minimization of the data mismatch term~$\DataEnergy_{\MN}$ with respect to the deformations is challenging due to
the sensitivity of the warping operator to small perturbations of the deformations.
Thus, to enhance the stability of the algorithm the warping operator is linearized w.r.t.~the deformation at $\tilde\discreteDeformation\in\discreteAdmset$,
which is chosen as the deformation of the previous iteration step in the algorithm.
To further improve the stability of the algorithm, the linearization is based on the gradient 
$\Lambda_c(\discreteFeature,\tilde\discreteFeature,\tilde\discreteDeformation)=\tfrac{1}{2}(\nabla_\MN\warp[\tilde\discreteFeature^c,\tilde\discreteDeformation]+\nabla_\MN\discreteFeature^c)$, which yields the modified mismatch energy
\begin{equation*}
\widetilde\DataEnergy_{\MN}[\discreteFeature,\tilde\discreteFeature,\discreteDeformation,\tilde\discreteDeformation]
=\frac{1}{2(3+C)}\sum_{c=1}^{3+C}\bigg\Vert\warp[\tilde\discreteFeature^c,\tilde\discreteDeformation]
+\left\langle\Lambda_c(\discreteFeature,\tilde\discreteFeature,\tilde\discreteDeformation),\discreteDeformation-\tilde\discreteDeformation\right\rangle-\discreteFeature^c\bigg\Vert_{L^2(\discreteDomain)}^2\,.
\end{equation*}
The mismatch energy can be efficiently minimized incorporating a proximal mapping, which is defined for a function~$\discreteFeature:\discreteDomain\to(-\infty,\infty]$ for $\tau>0$ as follows:
\[
\operatorname{prox}_\tau^\discreteFeature(i):=\argmin_{j:\discreteDomain\to(-\infty,\infty]}\frac{\tau}{2}\left\Vert i-j\right\Vert_{L^2(\discreteDomain)}^2+\discreteFeature(j)\,.
\]
The proximal operator with respect to the deformation~$\discreteDeformation$ for a fixed~$\tau>0$ is given by 
\begin{align*}
\operatorname{prox}_{\tau}^{\frac{K}{\delta}\widetilde\DataEnergy_{\MN}}[\discreteDeformation]
=&\left(\Id+\tfrac{K}{\tau\delta(3+C)}\sum_{c=1}^{3+C}\Lambda_c(\discreteFeature,\tilde\discreteFeature,\tilde\discreteDeformation)\Lambda_c(\discreteFeature,\tilde\discreteFeature,\tilde\discreteDeformation)^\top\right)^{-1} \\
&\quad\Big(\discreteDeformation-\tfrac{K}{\tau\delta(3+C)}\sum_{c=1}^{3+C}\Big(\Lambda_c(\discreteFeature,\tilde\discreteFeature,\tilde\discreteDeformation)\warp[\tilde\discreteFeature^c,\tilde\discreteDeformation]
-\Lambda_c(\discreteFeature,\tilde\discreteFeature,\tilde\discreteDeformation)\Lambda_c(\discreteFeature,\tilde\discreteFeature,\tilde\discreteDeformation)^\top\tilde\discreteDeformation-\Lambda_c(\discreteFeature,\tilde\discreteFeature,\tilde\discreteDeformation)\discreteFeature^c\Big)\Big)\,,
\end{align*}
where the function values on $\partial\discreteDomain$ remain unchanged.

\begin{algorithm}
\SetInd{1ex}{1ex}
\For{$j=1$ \KwTo $J$}{
\For{$k=1$ \KwTo $K$}{
\tcc{update anisotropy}
$\discreteAnisotropy_k^{[j+1]}=\discreteAnisotropy[\proj[\discreteFeature_k^{[j]}]]$\;
\tcc{update deformation}
$\discreteDeformation_k^{[j+1]}=\operatorname{prox}_{L[\discreteDeformation_k^{[j]}]}^{\frac{K}{\delta}\widetilde\DataEnergy_{\MN}}\left[\widetilde{\discreteDeformation}_k^{[j]}-\tfrac{K}{L[\discreteDeformation_k^{[j]}]}\nabla_{\discreteDeformation_k}
\RegEnergy_{\MN}[\widetilde{\discreteDeformation}_k^{[j]},\discreteAnisotropy_k^{[j+1]}]\right]$\;
\If{$k<K$}{
\tcc{update features}
$\discreteFeature_k^{[j+1]}=\widetilde{\discreteFeature}_k^{[j]}-\tfrac{1}{L[\discreteFeature_k^{[j]}]}\nabla_{\discreteFeature_k}\Pathenergy_{\MN}^K[\widehat{\discreteFeature}_k^{[j]},(\discreteDeformation_1^{[j+1]},\ldots,\discreteDeformation_k^{[j+1]},\discreteDeformation_{k+1}^{[j]},\ldots,\discreteDeformation_K^{[j]}),$
\hspace*{20em}$(\discreteAnisotropy_1^{[j+1]},\ldots,\discreteAnisotropy_k^{[j+1]},\discreteAnisotropy_{k+1}^{[j]},\ldots,\discreteAnisotropy_K^{[j]})]$\;
}
}
}
\caption{Algorithm for minimizing $\Pathenergy_{\MN}^K$ on one multilevel.}
\label{algo:optimization}
\end{algorithm}

Algorithm~\ref{algo:optimization} summarizes the iteration steps for the minimization of the fully discrete path energy~$\Pathenergy_{\MN}^K$, where
for a specific optimization variable~$\discreteFeature$ the extrapolation with $\beta>0$ of the $k^{th}$~path element in the $j^{th}$ iteration step reads as follows:
\begin{align*}
\widetilde{\discreteFeature}_k^{[j]}=\discreteFeature_k^{[j]}+\beta(\discreteFeature_k^{[j]}-\discreteFeature_k^{[j-1]}),\quad
\widehat{\discreteFeature}_k^{[j]}=(\discreteFeature_0,\discreteFeature_1^{[j+1]},\ldots,\discreteFeature_{k-1}^{[j+1]},
\widetilde{\discreteFeature}_k^{[j]},\discreteFeature_{k+1}^{[j]},\ldots,\discreteFeature_{K-1}^{[j]},\discreteFeature_K)\,.
\end{align*}
Here, we use the notation $L[\discreteFeature]$ for the Lipschitz constant of the function $\discreteFeature$, which is determined by backtracking.

\section{Numerical results}\label{sec:results}
In this section, numerical results for both the RGB and the deep feature model are shown.
All parameters used in the computation are specified in Table~\ref{tab:parameters}.
\begin{table}[htb]
\caption{The parameter values for all examples.}
\begin{center}
\begin{tabular}{ c | c | c}
parameter & RGB & deep  \\\hline\hline
\multirow{11}{*}{}
$K$ & \multicolumn{ 2}{c}{15} \\
$\delta$ & \multicolumn{ 2}{c}{1} \\
$L$ & \multicolumn{ 2}{c}{5} \\
$\beta$ & \multicolumn{ 2}{c}{$\tfrac{1}{\sqrt{2}}$}\\
$J$ & \multicolumn{ 2}{c}{250} \\
$\sigma$ & \multicolumn{ 2}{c}{0.5}\\
$\rho$ & \multicolumn{ 2}{c}{2}\\
$\xi_1$ & \multicolumn{ 2}{c}{1000} \\
$\xi_2$ & \multicolumn{ 2}{c}{$10^{-6}$}\\\hline
$\mu$ & 0.025 & 0.002 \\
$\lambda$ & 0.1 & 0.002 \\
$\eta$ &  & $10^{-6}$ \\
\end{tabular}
\end{center}
\label{tab:parameters}
\end{table}
Figure~\ref{fig:vanGoghSequence} depicts the geodesic sequences for two self-portraits by van Gogh\footnote{
public domain,
\url{https://commons.wikimedia.org/wiki/File:Vincent_Willem_van_Gogh_102.jpg};
\url{https://commons.wikimedia.org/wiki/File:SelbstPortrait_VG2.jpg}
}
($M \times N= 496 \times 496$)
for $k\in\{0,3,6,9,12,15\}$ obtained with the RGB model (first row) and the deep feature model (fifth row).
The superiority of the deep model compared to the simple RGB model is exemplarily visualized by the zoom (magnification factor~$4$) of the ear region 
depicted in the second and sixth row.
The remaining rows contain the corresponding sequences of anisotropy weights (third/seventh row) and color-coded displacement fields (fourth/eighth row),
where the hue refers to the direction of the displacements and the intensity is proportional to its norm as indicated by the leftmost color wheel.
Figure~\ref{fig:CatsSequence} presents analogous results for two photos of animals\footnote{
first photo detail by Domenico Salvagnin (CC BY 2.0), \url{https://commons.wikimedia.org/wiki/File:Yawn!!!_(331702223).jpg};
second photo detail by Eric Kilby (CC BY-SA 2.0), \url{https://commons.wikimedia.org/wiki/File:Panthera_tigris_-Franklin_Park_Zoo,_Massachusetts,_USA-8a_(2).jpg}}
for $M\times N=512\times 512$ with a zoom on the mouth region.
Note that the deep model is capable of accurately deforming the carnassial teeth.

\begin{figure}[htb]
\centering
 \includegraphics[width=0.84\linewidth]{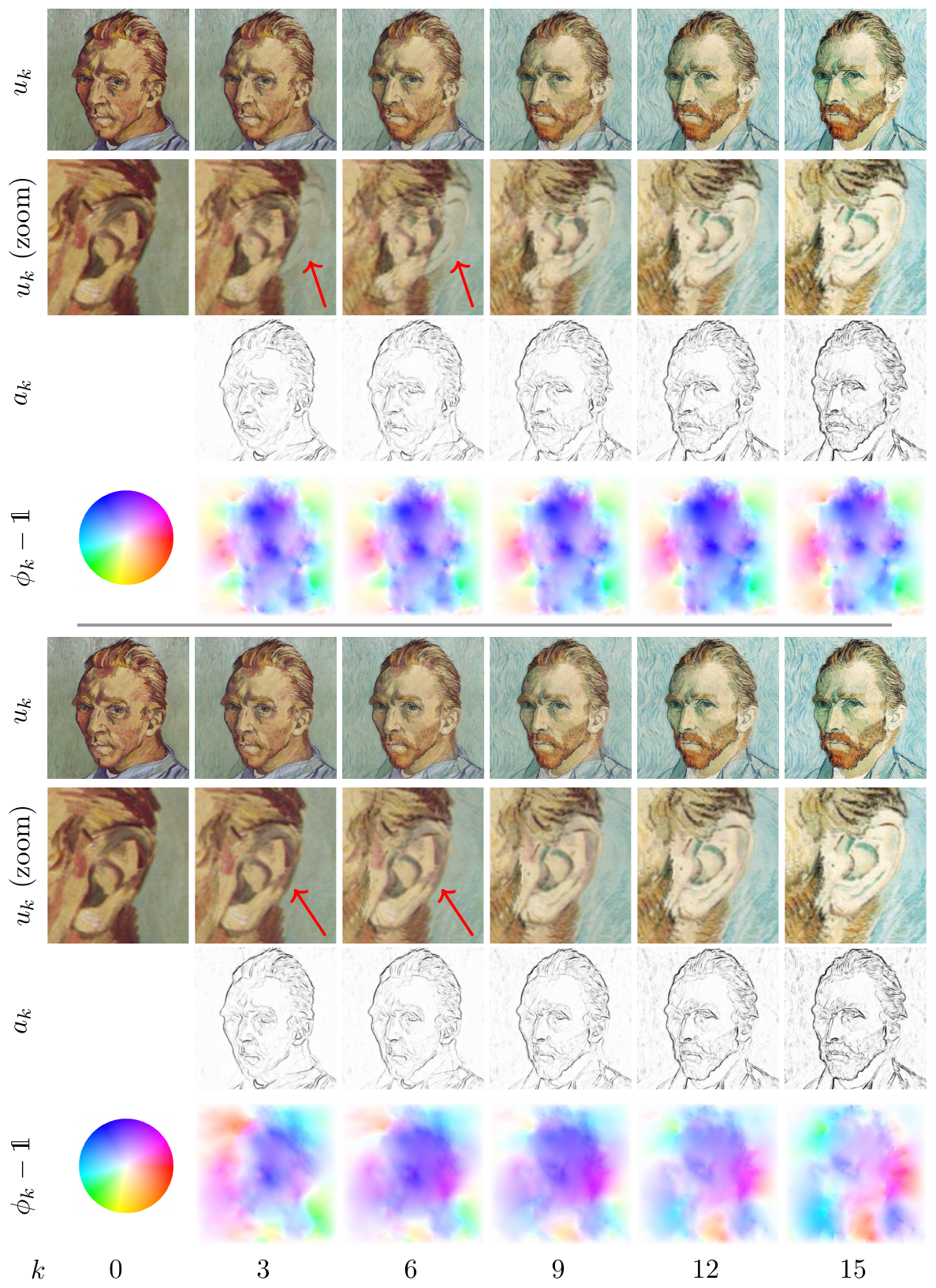}
 \caption{Time discrete geodesic sequences of self-portraits by van Gogh for the RGB feature (first row) and deep feature model (fifth row)
along with a zoom of the ear region with magnification factor~$4$ (second/sixth row)
and the associated sequences of anisotropy weights (third/seventh row) and color-coded displacement fields~$\deformation_k-\Id$ (fourth/eighth row).
Note that the intensity-based approach leads to blending artifacts indicated by the arrows, which are resolved in the deep metamorphosis model.
}
\label{fig:vanGoghSequence}
\end{figure}

\begin{figure}[htb]
\centering
 \includegraphics[width=0.84\linewidth]{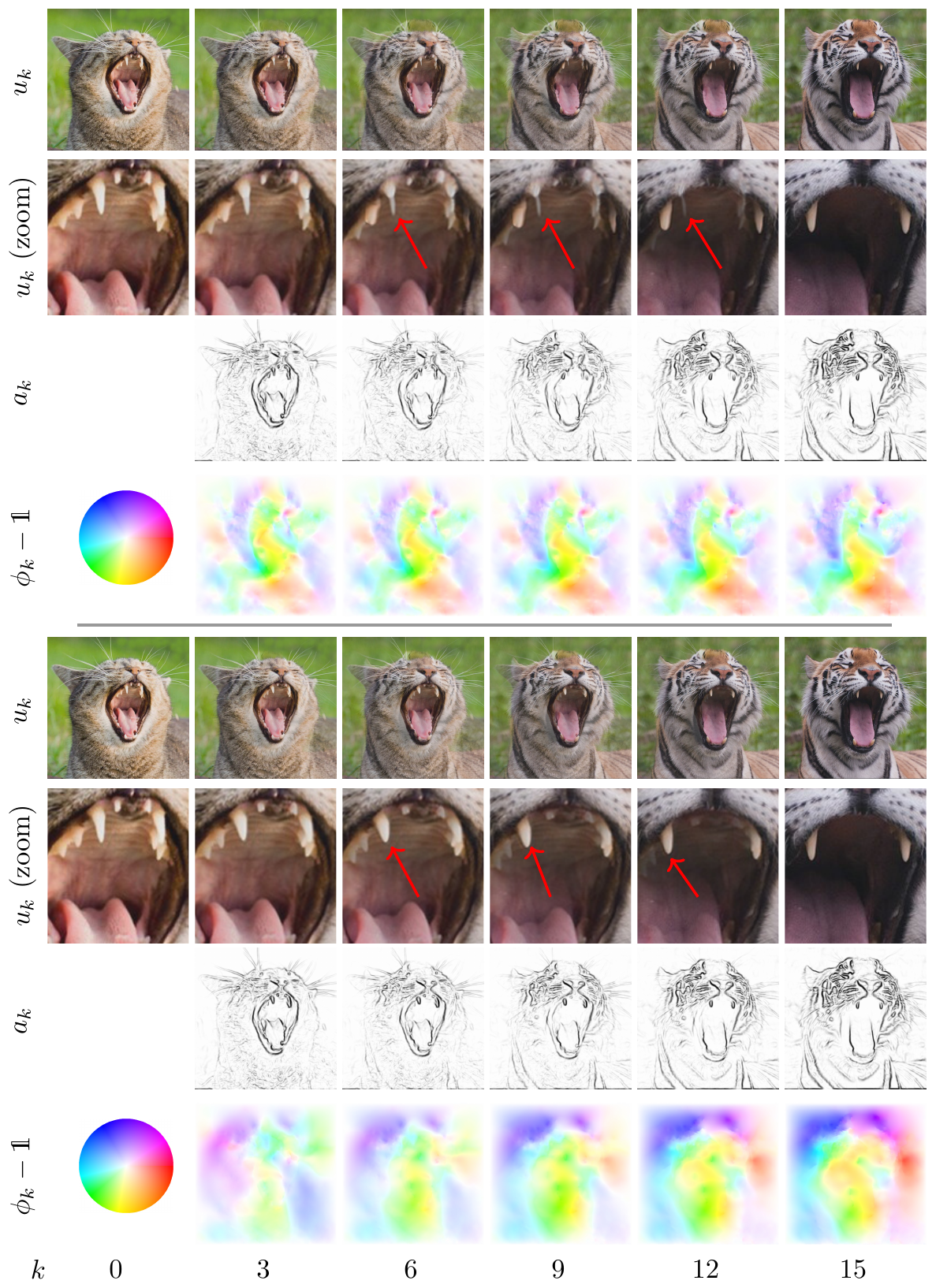}
 \caption{Time discrete geodesic sequences of animal photos for the RGB feature (first row) and deep feature model (fifth row)
along with a zoom of the mouth region with magnification factor~$4$ (second/sixth row)
and the associated sequences of anisotropy weights (third/seventh row) and color-coded displacement fields~$\deformation_k-\Id$ (fourth/eighth row).
Note that the novel deep feature-based model has significantly less blending artifacts as indicated by the arrows.
}
\label{fig:CatsSequence}
\end{figure}

Figure~\ref{fig:presidentsCatherineSequence} shows results of the deep feature model for two paintings of US presidents\footnote{
first painting by Gilbert Stuart (public domain),
\url{https://commons.wikimedia.org/wiki/File:Gilbert_Stuart_Williamstown_Portrait_of_George_Washington.jpg};
second painting by Rembrandt Peale (public domain),
\url{https://commons.wikimedia.org/wiki/File:Thomas_Jefferson_by_Rembrandt_Peale,_1800.jpg}
}
and two portraits of Catherine the Great\footnote{public domain, both portraits by J.~B.~Lampi \url{https://commons.wikimedia.org/wiki/File:Catherine_II_by_J.B.Lampi_(Deutsches_Historisches_Museum).jpg};
\url{https://commons.wikimedia.org/wiki/File:Catherine_II_by_J.B.Lampi_(1780s,_Kunsthistorisches_Museum).jpg}}.
In both cases, the input images have a resolution of $M\times N=512\times 512$.

Finally, we examine the effects of parameter changes of $\xi_1$ and $\delta$.
Figure~\ref{fig:anisotropyVariation} visualizes the anisotropy weight and the deformation field in the RGB model
for a $\xi_1$ value fostering a significantly stronger anisotropy implying much more pronounced jumps in the deformation field (compare with Figure~\ref{fig:vanGoghSequence}).
In addition, Figure~\ref{fig:variationDelta} illustrates the dependency of the resulting morphing sequences on~$\delta$ for the RGB model (first to third row) and the deep model (fourth to sixth row).  As a result, smaller values of $\delta$ lead to less blending. 
Furthermore, the generated geodesic paths using deep features are more robust to changes of~$\delta$ than the RGB model,
which can, for instance, be seen in the cheek or in the eye regions.
\begin{figure}[htb]
\centering
 \includegraphics[width=0.93\linewidth]{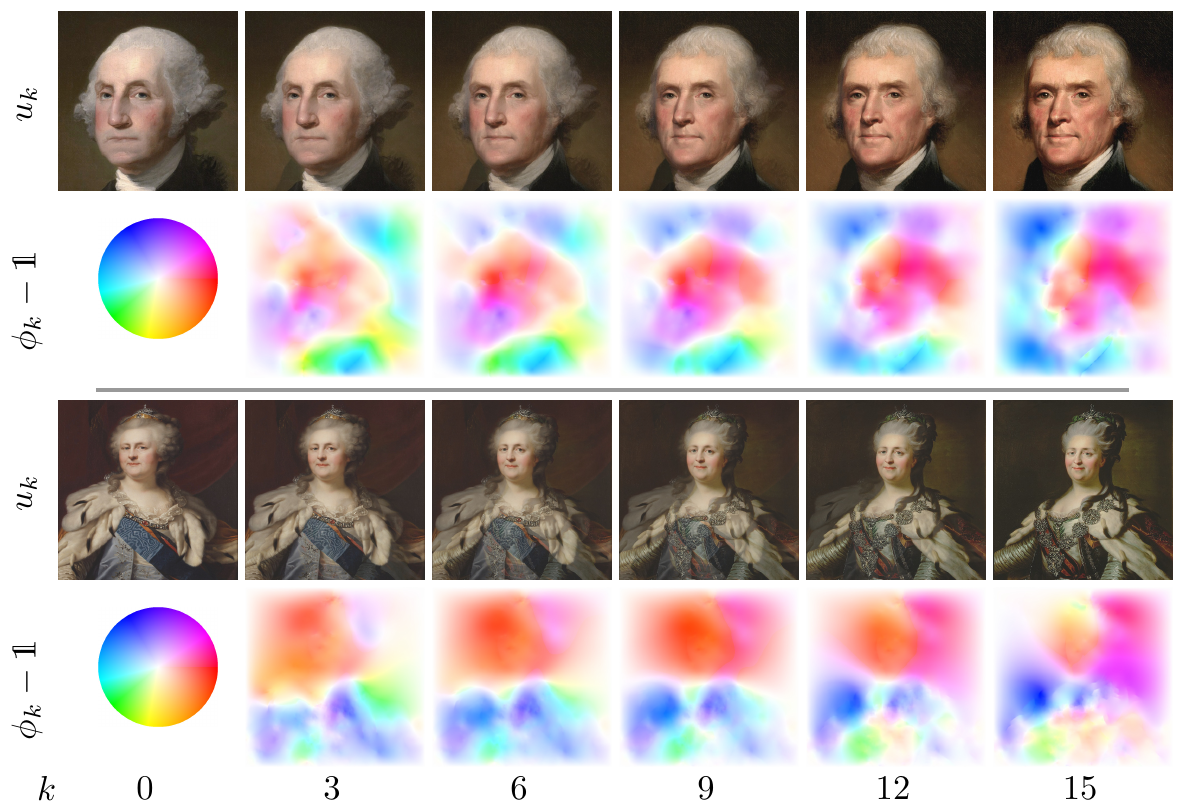}
 \caption{Pairs of time discrete geodesic paths using the deep feature model and corresponding color-coded displacement fields for paintings of US presidents
 (first/second row) as well as for paintings of Catherine the Great (third/forth row).}
\label{fig:presidentsCatherineSequence}
\end{figure}

In all numerical experiments, the displacement fields apparently evolve over time and the involved anisotropy promotes large deformation gradients in the proximity of image interfaces. These are indicated by the sharp interfaces in the color coding of the deformations.
Both models fail to match image regions with no obvious correspondence of the input images, which
can be seen on the cloth regions of the self-portraits, the presidents, and the empress examples,
as well as on parts of the body region and the background in the animal example, where blending artifacts occur.
The deep feature model clearly outperforms the simple RGB model
in regions where the semantic similarity is not reflected by the RGB color features such as the cheek and the ear in the van Gogh example as well as the teeth of the animals.
Moreover, to compute a visually appealing time discrete geodesic sequence, 
a fourth color channel representing a manual segmentation of image regions and a color adaptation of the van Gogh self-portraits was required in \cite{BeEf14}.
This is obsolete in the proposed deep feature based model due to the incorporation of semantic information.

\begin{figure}
\centering
\includegraphics[width=0.6\linewidth]{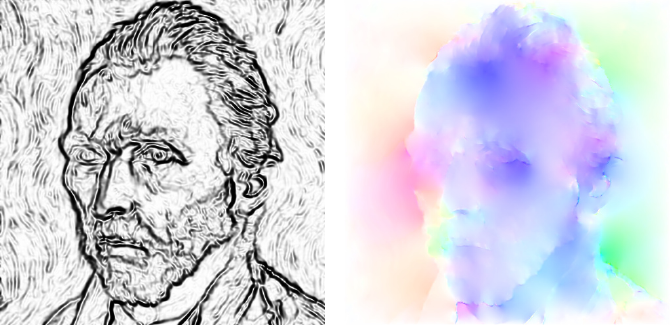}
\caption{Visualization of the anisotropy in RGB model for a significant smaller value $\xi_1=200$ compared to Figure~\ref{fig:vanGoghSequence}: anisotropy operators (left) and color-coded displacement fields (right) for $k=12$.
}
\label{fig:anisotropyVariation}
\end{figure}



\begin{figure}
\centering
\includegraphics[width=\linewidth]{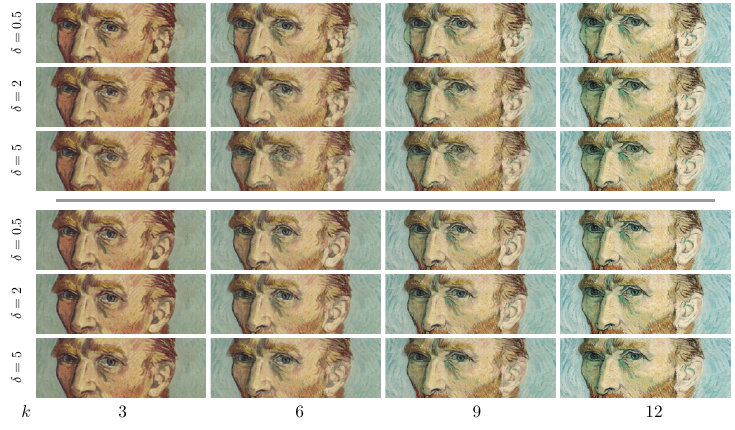}
\caption{Variation of the parameter $\delta$ for RGB model (first to third row) and deep feature model (fourth to sixth row).}
\label{fig:variationDelta}
\end{figure}

\subsection*{Acknowledgements} 
Alexander Effland, Erich Kobler and Thomas Pock acknowledge support from the European Research Council under the Horizon 2020 program, ERC starting grant HOMOVIS (No. 640156).
Marko Rajkovi\'c and Martin Rumpf acknowledge support of the Collaborative Research Center 1060 fun\-ded by 
the Deutsche Forschungsgemeinschaft (DFG, German Research Foundation)  
and the Hausdorff Center for Mathematics, funded by the DFG
under Germany's Excellence Strategy - GZ 2047/1, Projekt-ID 390685813.

\bibliographystyle{alpha}
\bibliography{JMIV}

\end{document}